\theoremstyle{plain}
\newtheorem{thm}{Theorem}[section]
\newtheorem{lemma}[thm]{Lemma}
\newtheorem{prop}[thm]{Proposition}
\newtheorem*{thm*}{Theorem}
\newtheorem*{prop*}{Proposition}
\newtheorem*{lemma*}{Lemma}
\theoremstyle{remark}
\newtheorem{rmk}[thm]{Remark}
\newtheorem{example}[thm]{Example}
\theoremstyle{definition}
\newtheorem{defn}[thm]{Definition}
\numberwithin{equation}{section}
\newenvironment{PfofTracking}[1]
{\par\vskip2\parsep\noindent{\sc Proof of Theorem\ \ref{Thm:GeneralConvergence}. }}{{\hfill
$\Box$}
\par\vskip2\parsep}
\newenvironment{PfofOptJoinings}[1]
{\par\vskip2\parsep\noindent{\sc Proof of Theorem\ \ref{Thm:BasicPropertiesOfJmin}. }}{{\hfill
$\Box$}
\par\vskip2\parsep}
\newenvironment{PfofInference}[1]
{\par\vskip2\parsep\noindent{\sc Proof of Theorem\ \ref{Thm:Inference}. }}{{\hfill
$\Box$}
\par\vskip2\parsep}
\newenvironment{PfofQuantConsist_I}[1]
{\par\vskip2\parsep\noindent{\sc Proof of Theorem\ \ref{Thm:QuantConsist}, Part (1). }}{{\hfill
$\Box$}
\par\vskip2\parsep}
\newenvironment{PfofQuantConsist_II}[1]
{\par\vskip2\parsep\noindent{\sc Proof of Theorem\ \ref{Thm:QuantConsist}, Part (2). }}{{\hfill
$\Box$}
\par\vskip2\parsep}
\newenvironment{PfofCircleRotations}[1]
{\par\vskip2\parsep\noindent{\sc Proof of Proposition\ \ref{Prop:CircleRotations}. }}{{\hfill
$\Box$}
\par\vskip2\parsep}
\newcommand{\Comments}{1}
\newcommand{\mynote}[2]{\ifnum\Comments=1\textcolor{#1}{#2}\fi}
\def\E{\mathbb{E}}
\def\R{\mathbb{R}}
\def\N{\mathbb{N}}
\def\cF{\mathcal{F}}
\def\cJ{\mathcal{J}}
\def\cL{\mathcal{L}}
\def\cM{\mathcal{M}}
\def\cP{\mathcal{P}}
\def\cR{\mathcal{R}}
\def\cU{\mathcal{U}}
\def\cV{\mathcal{V}}
\def\cX{\mathcal{X}}
\def\cY{\mathcal{Y}}
\def\lab{\mbox{lab}}
\def\Tracking{Tracking Theorem}
\DeclareMathOperator{\dbar}{\overline{\textit{d}}}
\DeclareMathOperator{\Id}{Id}
\DeclareMathOperator{\proj}{proj}
\DeclareMathOperator{\supp}{supp}
\DeclareMathOperator{\argmax}{argmax}
\DeclareMathOperator*{\argmin}{argmin}
\newcommand{\pf}[2]{m_{#2}(#1)}
\title{Variational analysis of inference from dynamical systems}
\begin{document}

\author{Kevin McGoff and Andrew B. Nobel}
\address{Kevin McGoff\\
Department of Mathematics\\
University of North Carolina at Charlotte\\
Charlotte, NC 28223}
\email{kmcgoff1@uncc.edu}
\urladdr{https://clas-math.uncc.edu/kevin-mcgoff/}
\address{Andrew B. Nobel\\
Department of Statistics and Operations Research\\
University of North Carolina at Chapel Hill\\
308 Hanes Building\\
Chapel Hill, NC 27599}
\email{nobel@email.unc.edu}

\begin{abstract}
We introduce and study a variational framework for the analysis of empirical risk based
inference for dynamical systems and ergodic processes.  
The analysis applies to a two-stage estimation procedure 
in which (i) the trajectory of an observed (but unknown) system is fit to a 
trajectory from a known reference system by minimizing cumulative 
per-state loss, 
and (ii) a parameter estimate is obtained from the initial state of the best
fit reference trajectory.
We show that the empirical risk of the best fit trajectory  
converges almost surely to a constant that can be expressed
in a variational form
as the minimal expected loss over dynamically invariant couplings (joinings) of the 
observed and reference systems.  Moreover, we establish that 
the family of joinings minimizing the expected loss is convex and compact, and that it
fully characterizes the asymptotic
behavior of the estimated parameters, addressing both identifiability and misspecification.   
The two-stage estimation framework and associated variational analysis 
apply to a broad family of empirical risk miminization
procedures for dependent observations.  To illustrate this, we apply variational
analysis to the well studied problems of maximum likelihood and non-linear regression,
and then undertake an extended analysis of system identification 
from quantized trajectories subject to noise, a problem of interest in dynamics, where
the models themselves exhibit dynamical behavior across time.
\end{abstract}


\maketitle


\section{Introduction} \label{Sect:Tracking_Intro}

Minimization of empirical risk is a common approach to statistical inference, having roots in maximum likelihood estimation 
from independent observations. 
Unlike maximum likelihood estimation, however, the form and motivation of empirical risk minimization remain unchanged when considering stationary, dependent observations.

In this paper we introduce and study a variational framework for the analysis of empirical risk based inference 
from dynamical systems and ergodic processes.  
The variational analysis applies to a family of procedures that can be decomposed into two stages: a {\em tracking} stage in which a trajectory of a known reference system is fit to the trajectory of an observed system by minimizing cumulative loss; and a {\em translation} stage in which a parameter estimate is obtained by applying a continuous invariant map to the initial state of the best-fit reference trajectory.  
We emphasize that tracking plus translation is not being proposed as an inference procedure, 
but rather as a framework within which to study empirical risk minimization.  
In Section \ref{Sect:Applications} we show that the two-stage inference framework encompasses the well studied problems 
of maximum likelihood estimation and non-linear regression, as well as new, more complex problems, 
including fitting dynamical models and system identification from quantized trajectories.

This paper makes three main contributions. 
The first contribution is the introduction of ideas and techniques from ergodic theory to the study of 
empirical risk minimization through an analysis of two-stage inference.
The most important of these ideas is the notion of a joining, which is a dynamically invariant coupling of two systems (see Section \ref{Sect:Tracking} for definition and further discussion).  
Our principal results concerning two-stage inference are the following.
First, the average loss of the optimal reference trajectory in the tracking stage converges almost surely to a 
constant that can be expressed in variational form as the minimum expected loss over all joinings of the observed 
and reference systems.  Second, the family of optimal joinings, namely those that achieve the minimum expected loss
in the variational problem, is non-empty, convex, 
and compact in the weak topology.
Third, the family of optimal joinings characterizes the limiting behavior of the estimates derived in the translation stage.

Together, these results constitute a variational analysis of the two-stage
inference framework.
In particular, they provide an explicit, variational characterization of the asymptotic behavior of two-stage inference 
that establishes a direct connection (not previously known or understood) between minimum risk-based inference 
and an infinite dimensional optimization problem in which the observation process is projected, 
using a joining-based distortion, onto a family of processes that is associated with the statistical models under study.
This variational analysis is the second main contribution of the paper.

Variational analysis has a number of desirable properties.  
It requires relatively mild assumptions (given in detail below).  It readily accommodates model misspecification, 
as the observed and reference systems need not be related to one another.
It addresses the problem of identifiability in a direct way, by fully characterizing 
the limiting parameter set, and it provides a systematic means of studying the limiting behavior and potential consistency 
of two-stage inference.  

The third main contribution of the paper is an extended application of variational analysis to the problem of  
system identification from quantized trajectories with label noise, a problem that lies at the intersection
of statistics and dynamical systems.  In this case, 
analysis requires results and constructions from the theory of joinings, illustrating the full power
of the variational approach.

In the next two sections we describe the two-stage inference procedure and present the principal results underlying 
variational analysis.  
Several applications of variational analysis to several existing and new inference problems are given in Section \ref{Sect:Applications}, and our main application, the analysis of system identification
with quantized trajectories, is given in Section \ref{Sect:QuantizedIntro}.

%
%
%

\section{The tracking stage} \label{Sect:Tracking}

The tracking stage of the two-stage inference framework has three basic components: an observed ergodic system, a reference topological system, 
and an integrable loss function.
  
The observed dynamical system is a triple $(\cY, T, \nu)$ consisting of a non-empty Polish space 
$\cY$, a Borel measurable map $T : \cY \to \cY$, and a Borel probability measure $\nu$ on $\cY$
that is invariant and ergodic under $T$.  
Recall that $\nu$ is invariant under $T$ if the action of $T$ preserves $\nu$ in
the sense that $\nu(T^{-1}A) = \nu(A)$ for each Borel set $A \subseteq \cY$.
Furthermore, recall that $\nu$ is ergodic under $T$ if $T^{-1} A = A$ implies $\nu(A) \in \{ 0, 1 \}$, that is, any
set that is invariant under the action of $T$ is negligible or has full measure.
The triple $(\cY, T, \nu)$ is the standard model of a stationary ergodic dynamical system
\cite{Petersen1983,Walters2000}.  It is well known \cite{Breiman1992,Durrett2010} that any stationary ergodic process can be expressed as such a triple (with $\cY$ equal to a sequence space, $T$ equal to the left shift transformation, and $\nu$ equal to the process distribution) plus a univariate projection.   
     
By contrast with the observed dynamical system, the reference system is a pair $(\cX, S)$ 
consisting of a non-empty, compact metric space $\cX$ 
and a continuous map $S: \cX \to \cX$.  In the dynamics literature, the pair $(\cX, S)$ is referred
to as a topological dynamical system \cite{DGS,Walters2000}.
The loss $\ell: \cX \times \cY \to \R$ is assumed to be a lower semicontinuous function 
satisfying the envelope condition $\sup_{x \in \cX} |\ell(x,y)| \leq \ell^*(y)$ for some $\ell^* \in L^1(\nu)$. 
{\it In what follows, the conditions above will be referred to as the standard assumptions.}

\vskip.1in

{\bf Tracking.} In the tracking problem we have access to a single trajectory $y$, $T y$, $T^2 y$, $\ldots$ of the observed 
system $(\cY, T, \nu)$ with initial state $y$ drawn according to $\nu$.  
Here $T^k$ denotes the k-fold composition of $T$ with itself and, by convention, $T^0$ is the identity.
At time $n$ we observe the initial segment $y, Ty, \ldots, T^{n-1}y$ of the trajectory and 
identify a corresponding initial state $x$ of the reference system $(\cX, S)$ that
minimizes the average loss
$
n^{-1} \sum_{i=0}^{n-1} \ell(S^i x, T^i y) 
$.  
Our assumptions on $\ell$ and $(\cX, S)$ ensure that a minimizing initial condition exists.

\vskip.1in

The reader will note that there is an asymmetry in the specification of the observed and reference systems: 
the observed system is equipped with an invariant ergodic measure, while the reference system is specified without 
reference to any particular invariant measure.  Indeed, we view the reference system as a {\em family} of 
stationary dynamical systems corresponding to the family of $S$-invariant measures.
In more detail, let $\mathcal{M}(\cX,S)$ be the family of all Borel probability 
measures on $\cX$ that are invariant under $S$ in the sense that $\mu \circ S^{-1} = \mu$.
It can be shown \cite{KrylovBogolioubov} that the family $\mathcal{M}(\cX,S)$ is non-empty.
Importantly, every measure $\mu \in \mathcal{M}(\cX,S)$ determines a stationary, but not necessarily
ergodic, dynamical system $(\cX,S,\mu)$.  Let
\[
\mathcal{S}(\cX,S) = \{ (\cX,S,\mu) : \mu \in \mathcal{M}(\cX,S)\}
\]
be the family of all such systems.  As we show below, the asymptotic behavior of two-stage inference
is determined by the relationship between the observed system $(\cY, T, \nu)$ and the family
$\mathcal{S}(\cX,S)$.  Quantifying this relationship requires two concepts.
The first is the notion of joining, which is a dynamically invariant couplings of two stationary systems;
the second is a variationally defined distortion between stationary systems. 

\vskip.1in

\begin{defn}
Let $S \times T: \cX \times \cY \to \cX \times \cY$ be the product transformation of 
$S$ and $T$, defined by $(S \times T)(x,y) = (Sx, Ty)$, and
let $\mu \in \mathcal{M}(\cX,S)$.
A Borel probability measure $\lambda$ on $\cX \times \cY$ is a said to be a {\em joining} of 
the systems $(\cX,S,\mu)$ and $(\cY,T,\nu)$ if $\lambda$ is invariant under $S \times T$ and the marginals 
of $\lambda$ on $\cX$ and $\cY$ are $\mu$ and $\nu$, respectively.  
\end{defn}

\vskip.1in

A joining is a coupling of the measures $\mu$ and $\nu$ with the additional property that the coupling is invariant 
(stationary) under the product transformation $S \times T$. The simple example given below gives an indication of the role that this invariance condition plays. 
Joinings were introduced and first studied by Furstenberg \cite{Furstenberg1967}, 
and they have played an important role in ergodic theory since then, see \cite{Rue2006,Glasner2003} for more details.

For each $\mu \in \mathcal{M}(\cX,S)$ let $\cJ(\mu, \nu)$ denote the family of all joinings 
of $(\cX,S,\mu)$ and $(\cY,T,\nu)$.  Note that $\cJ(\mu, \nu)$ contains 
the product measure $\mu \otimes \nu$ and is therefore non-empty.
Define 
\[
\cJ(S: \nu) = \bigcup_{\mu \in \mathcal{M}(\cX,S)} \, \cJ(\mu, \nu),
\]
to be the set of all joinings of the observed system with some system in $\mathcal{S}(\cX,S)$.
It is easy to see that $\cJ(S: \nu)$ is just the set of Borel probability measures $\lambda$ on 
$\cX \times \cY$ such that $\lambda$ is invariant under $S \times T$ and the $\cY$-marginal 
of $\lambda$ is $\nu$.

\begin{example}
As a simple example of a joining, let $\mathcal{X} = \mathbb{Z} / 2 \mathbb{Z}$ with 
$S(x) = x + 1 \mod 2$, and let $\mathcal{Y} = \mathbb{Z} / 3 \mathbb{Z}$ with $T(y) = y+1 \mod 3$. There is only one measure $\mu$ in $\mathcal{M}(\mathcal{X},S)$, which places measure $1/2$ on both $0$ and $1$. Similarly, there is only one invariant and ergodic measure $\nu$ for $(\mathcal{Y},T)$, which places measure $1/3$ on $0,1$, and $2$.  To form a joining of these systems, we need a probability measure on $\mathcal{X} \times \mathcal{Y}$ that is invariant under $S \times T$. 
As $2$ and $3$ are relatively prime, orbits of the map $S \times T$ cycle among the pairs 
$(0,0)$, $(1,1)$, $(0,2)$, $(1,0)$, $(0,1)$, $(1,2)$ in that order. 
Hence, there is only one invariant measure for $S \times T$, the uniform measure, which is the product joining of $\mu$ and $\nu$. Observe that in this case $\mathcal{J}(S : \nu)$ contains only one element, despite the fact that there are uncountably many couplings of $\mu$ and $\nu$.
\end{example}

Our first principal result shows that the average risk in the tracking problem has a limit, 
and that the limit
has a simple variational form involving the loss function $\ell$ and the family of joinings $\cJ(S: \nu)$.
The proof of the theorem is given in Section \ref{Sect:TrackingLemma}.  

\vskip.1in

\begin{thm}[Variational expression for limiting average risk] 
\label{Thm:GeneralConvergence}
Under the standard assumptions, for $\nu$-almost every initial state $y \in \cY$,
\begin{equation}
\label{Tracking1} 
\lim_n  \inf_{x \in \cX} \frac{1}{n} \sum_{k=0}^{n-1} \ell(S^k x, \, T^ky)
\ = 
\inf_{\lambda \in \cJ(S: \nu)} \int \ell \, d\lambda
\, := \, 
L(S: \nu) .
\end{equation}
The second infimum is finite, and is attained by some joining $\lambda$ in $\cJ(S : \nu)$.
\end{thm}

\begin{rmk}[Distortion and Projections]
Note that $L(S:\nu)$ can be written as $\inf_{\mu \in \mathcal{M}(\cX,S)} d(\mu : \nu)$ where
$d(\mu : \nu) = \inf_{\lambda \in \cJ(\mu: \nu)} \int \ell \, d\lambda$ defines a joining-based 
distortion between the stationary systems $(\cX,S, \mu)$ and $(\cY,T,\nu)$ under the loss function $\ell$.  
Thus the limiting average risk $L(S: \nu)$ in the tracking problem is the minimum distortion between the 
observed ergodic system $(\cY,T,\nu)$ and the family 
$\mathcal{S}(\cX,S)$ associated with the reference system $(\cX,S)$.
It is natural then to investigate the joinings in $\cJ(S:  \nu)$ that achieve the minimum 
$L(S: \nu)$.  These joinings capture the joint behavior of the observed system $(\cY,T,\nu)$ and its 
projection onto the family $\mathcal{S}(\cX,S)$ under the distortion $d(\cdot : \cdot)$.  
\end{rmk}

Recall that $\mu$ is said to be an extreme point of a convex family of probability measures 
$\cM$ if $\mu = t \mu_1 + (1-t) \mu_2$ 
with $t \in (0,1)$ and $\mu_1,\mu_2 \in \cM$ implies $\mu_1 = \mu_2 = \mu$.
The proof of the following theorem can be found in Appendix \ref{Sect:SetOfOptimalJoinings}.

\begin{thm}[Structure of optimal joinings] 
\label{Thm:BasicPropertiesOfJmin}
Under the standard assumptions, the set of optimal joinings
\[
\cJ_{\tt{min}}( S: \nu) \, = \, \Big\{ \lambda \in \cJ( S: \nu) : \int \ell \, d\lambda = L(S: \nu) \Big\}
\] 
is non-empty, convex, and compact in the weak topology.
Furthermore, a joining $\lambda$ is an extreme point of $\cJ_{\tt{min}}( S:  \nu)$ if and only if it 
is ergodic under $S \times T$.
\end{thm}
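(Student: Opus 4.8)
The plan is to establish each of the four assertions---nonemptiness, convexity, weak compactness, and the ergodic characterization of extreme points---in turn, leaning on the Tracking Theorem (Theorem \ref{Thm:GeneralConvergence}) for nonemptiness and on standard ergodic-theoretic facts about $\cJ(S:\nu)$ for the rest. Nonemptiness is immediate: Theorem \ref{Thm:GeneralConvergence} asserts the infimum defining $C(S:\nu)$ is attained, so $\cJ_{min}(S:\nu)\neq\emptyset$. For convexity, I would observe that $\cJ(S:\nu)$ is itself convex (a convex combination of $S\times T$-invariant measures with $\cY$-marginal $\nu$ again has these properties), and the map $\lambda\mapsto\int c\,d\lambda$ is affine; hence the set on which this affine functional attains its minimum over the convex set $\cJ(S:\nu)$ is convex.

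For weak compactness, the key point is that $\cJ(S:\nu)$ is weakly compact. Since $\cX$ is compact, the $\cX$-marginals live in the weakly compact set $\mathcal{M}(\cX,S)$; tightness in the $\cY$-coordinate comes from the single fixed measure $\nu$ (a probability measure on a Polish space, hence tight), so $\cJ(S:\nu)$ is tight, and it is weakly closed because both the constraint of being $S\times T$-invariant and the constraint of having $\cY$-marginal $\nu$ are preserved under weak limits. Thus $\cJ(S:\nu)$ is weakly compact by Prokhorov. Now $\cJ_{min}(S:\nu)=\{\lambda\in\cJ(S:\nu):\int c\,d\lambda\le C(S:\nu)\}$, so it suffices to show this sublevel set is weakly closed; this follows because $c$ is lower semicontinuous and bounded above by $c^*\in L^1(\nu)$, which gives $\liminf_k\int c\,d\lambda_k\ge\int c\,d\lambda$ along any weakly convergent sequence $\lambda_k\to\lambda$ in $\cJ(S:\nu)$ (apply the portmanteau-type inequality for l.s.c. functions, after truncating from above using the uniform $L^1$ domination to control the contribution of the $\cY$-marginal, which is the fixed measure $\nu$ throughout). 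A closed subset of a compact set is compact.

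For the extreme-point characterization, recall the classical fact that the extreme points of the simplex $\mathcal{M}(\cX\times\cY,S\times T)$ of $(S\times T)$-invariant measures are exactly the ergodic ones. One direction: if $\lambda\in\cJ_{min}(S:\nu)$ is ergodic under $S\times T$ and $\lambda=t\lambda_1+(1-t)\lambda_2$ with $t\in(0,1)$ and $\lambda_i\in\cJ_{min}(S:\nu)\subseteq\mathcal{M}(\cX\times\cY,S\times T)$, then ergodicity of $\lambda$ forces $\lambda_1=\lambda_2=\lambda$, so $\lambda$ is extreme. Conversely, suppose $\lambda\in\cJ_{min}(S:\nu)$ is not ergodic. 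By the ergodic decomposition, $\lambda=\int\lambda_\omega\,d\rho(\omega)$ over ergodic components $\lambda_\omega$ of $S\times T$, with $\rho$ not a point mass. Each $\lambda_\omega$ has $\cY$-marginal equal to the corresponding ergodic component of $\nu$; but $\nu$ is ergodic, so its only ergodic component is $\nu$ itself, whence every $\lambda_\omega$ has $\cY$-marginal $\nu$ and therefore $\lambda_\omega\in\cJ(S:\nu)$. Since $C(S:\nu)=\int c\,d\lambda=\int\big(\int c\,d\lambda_\omega\big)d\rho(\omega)$ and each inner integral is $\ge C(S:\nu)$ by definition of the infimum, we get $\int c\,d\lambda_\omega=C(S:\nu)$ for $\rho$-a.e. $\omega$, i.e. $\lambda_\omega\in\cJ_{min}(S:\nu)$ for $\rho$-a.e. $\omega$. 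As $\rho$ is not a point mass, we can split $\rho=t\rho_1+(1-t)\rho_2$ into two distinct pieces and obtain a nontrivial convex decomposition of $\lambda$ within $\cJ_{min}(S:\nu)$, so $\lambda$ is not extreme.

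The main obstacle I anticipate is the measurability and integrability bookkeeping in the lower-semicontinuity argument for weak closedness of the sublevel set---specifically, justifying that the uniform domination $\sup_x|c(x,y)|\le c^*(y)$ with $c^*\in L^1(\nu)$ lets one pass to the limit despite $c$ being only l.s.c. and possibly unbounded below; the standard device is to write $c$ as an increasing limit of bounded continuous functions and invoke monotone convergence, using the fixed $\cY$-marginal $\nu$ to dominate uniformly in $k$. A secondary technical point is confirming that ergodic components of a joining project to ergodic components of the marginal, which is routine but should be cited or spelled out.
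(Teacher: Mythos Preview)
Your overall strategy matches the paper's: nonemptiness from the Tracking Theorem, convexity from affineness of $\lambda\mapsto\int c\,d\lambda$, compactness from compactness of $\cJ(S:\nu)$ together with lower semicontinuity of that functional, and the extreme-point characterization via the ergodic decomposition combined with ergodicity of $\nu$ forcing every ergodic component of a joining to have $\cY$-marginal $\nu$. Your handling of lower semicontinuity and of the extreme points is essentially identical to the paper's Lemmas~\ref{Lemma:PhigIsUSC}, \ref{Lemma:Botanical}, and~\ref{Lemma:JminErgDecomp}.

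There is, however, a genuine gap in your compactness step. You assert that $\cJ(S:\nu)$ is weakly closed because ``the constraint of being $S\times T$-invariant \ldots\ [is] preserved under weak limits.'' This is true when $S\times T$ is continuous, but under the standard assumptions $T$ is merely Borel measurable on the Polish space $\cY$. For a discontinuous $T$ the set of $(S\times T)$-invariant measures need not be weakly closed, so your argument as written does not go through. The paper resolves this in Proposition~\ref{Prop:Marcus} by lifting to the one-sided shift: with $\tilde\nu$ the process measure on $\cY^{\N}$ generated by $(\cY,T,\nu)$, one works instead with $\cJ(S:\tilde\nu)\subset\cM(\cX\times\cY^{\N})$. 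Since the left shift $\tau$ on $\cY^{\N}$ is continuous, invariance under $S\times\tau$ \emph{is} preserved under weak limits, and tightness follows from compactness of $\cX$ and tightness of $\{\tilde\nu\}$; hence $\cJ(S:\tilde\nu)$ is weakly compact. Lemma~\ref{Lemma:GoHeels} then shows that the continuous push-forward $m_{\cX\times\cY}$ maps $\cJ(S:\tilde\nu)$ onto $\cJ(S:\nu)$, so the latter is compact as the continuous image of a compact set. This is the piece your outline is missing; the obstacle you flagged (lower semicontinuity of $\phi_c$ via truncation) is real but secondary, and the paper handles it exactly as you suggest.
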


\begin{rmk} \label{Rmk:ErgJoining}
It follows from Theorem \ref{Thm:BasicPropertiesOfJmin} and the Krein-Milman 
theorem that there exists an optimal joining that is ergodic under $S \times T$.
By considering the $\cX$-marginal of such a joining, we see that there exists an ergodic measure 
$\mu$ for the system $(\cX,S)$ that can be optimally joined with $\nu$. 
\end{rmk}

\section{Translation} \label{Sect:Translation}

For a fixed sample size $n$ 
the tracking problem is a special case of empirical risk minimization
in which the observed sequence $y, Ty, \ldots, T^{n-1}y$ is fit using a family of sequences
$x, Sx,\ldots, S^{n-1}x$ indexed by initial states $x \in \cX$ of the reference system.  In the translation
stage of the inference procedure, a parameter estimate is obtained from the initial state
of a reference trajectory that minimizes, or nearly minimizes, the empirical risk.  

Let $\Theta$ be a compact metrizable parameter space.  A {\em parameter map} is a continuous function 
$\varphi: \cX \to \Theta$ that is invariant under the dynamics of the reference system in the sense
that $\varphi \circ S = \varphi$.  {\it In what follows these conditions on $\Theta$ and $\varphi$ are
included in the standard assumptions.}
The invariance of $\varphi$ ensures that the value $\theta = \varphi(x)$ is constant 
on the trajectory $x, S x, S^2 x, \ldots$, and $\theta$ may therefore be viewed as a property of the entire trajectory of $x$ under $S$.

\begin{defn}[$\varphi$-estimation] \label{Defn:PhiEstimation}
A sequence of measurable functions $\theta_n: \cY^n \to \Theta$, $n \geq 1$, is an 
{\em optimal $\varphi$-estimation scheme} if $\theta_n = \varphi \circ x_n$
where the functions $x_n : \cY^n \to \cX$ are such that for $\nu$-almost every $y$ in $\cY$,
with 
$\hat{x}_{n} = x_n(y, \ldots, T^{n-1} y)$, 
\begin{equation} \label{Eqn:MinimizingScheme}
\lim_n \frac{1}{n} \sum_{k=0}^{n-1} \ell( S^k \hat{x}_{n}, \, T^k y) 
\, = \, 
\lim_n \inf_{x \in \cX} \frac{1}{n} \sum_{k=0}^{n-1} \ell( S^k x, T^k y) .
\end{equation}
Thus the estimate $\hat{\theta}_n = \theta_n (y, \ldots, T^{n-1}y)$ is obtained
by applying the parameter map $\varphi$ to an initial state 
$\hat{x}_n$ of the reference system obtained by minimizing (or nearly minimizing) the
average loss with the observed trajectory
$y, T y, \ldots, T^{n-1}y$.
\end{defn}

\vskip.1in

We wish to characterize the limiting behavior of optimal $\varphi$-estimation schemes $\{ \hat{\theta}_n \}$.
To this end, for each $\theta \in \Theta$ let $\cX_\theta = \varphi^{-1}\{\theta\}$ be the set of states in $\cX$ 
that are mapped to $\theta$, and let $S_{\theta}$ be the restriction of the continuous map $S$ to $\cX_\theta$.  
It is easy to see that $\cX_\theta$ is a compact subset of $\cX$ that is invariant under $S_\theta$, and therefore 
$(\cX_\theta, S_{\theta})$ is a topological dynamical system.  
Thus the parameter map $\varphi$ gives rise to a family of topological systems, indexed by the parameters 
$\theta \in \Theta$, each of which can act as a reference system for tracking the observed system $(\cY, T, \nu)$.

It follows from Theorem \ref{Thm:GeneralConvergence} that the limiting average 
loss of tracking $(\cY, T, \nu)$ with the reference system $(\cX_\theta, S_{\theta})$ 
is equal to $L(S_{\theta}: \nu)$.  
It is not difficult to show (see Lemma \ref{Lemma:ThetaVariational} below) that 
$L(S : \nu) = \min_{\theta \in \Theta} L(S_{\theta} : \nu)$, and we therefore 
consider the set of parameters with minimal limiting loss, namely,
\begin{equation} 
\label{Eqn:DefOfThetaMin}
\Theta_{\tt{min}} = \argmin_{\theta \in \Theta} \, L\bigl(S_{\theta}: \nu \bigr)  . 
\end{equation}
It is straightforward to show that 
\begin{equation} \label{Eqn:EquivDefThetaMin}
\Theta_{\tt{min}} = 
\bigl\{ \theta \in \Theta : \exists \, \lambda \in \cJ_{\tt{min}}(S:  \nu) \mbox{ s.t.\ } \lambda( \cX_{\theta} \times \cY) = 1 \bigr\} 
\end{equation}
is just the set of parameters $\theta$ whose associated states 
$\cX_\theta \subseteq \cX$ support an optimal joining 
with the observed system $(\cY, T, \nu)$. 
In other words, $\Theta_{\tt{min}}$ is the set of parameters that can be optimally joined with the observed system, and it therefore serves as a natural limit set for optimal $\varphi$-estimation schemes.

The next theorem is our principal result concerning two-stage inference. 
Its proof appears in Section \ref{Sect:GeneralResults}. 
Here and throughout the paper, we say that a sequence $\{\theta_n\}_{n \geq 1}$ converges 
to a set $\Theta_0$ if the distance of $\theta_n$ to $\Theta_0$ converges to zero.

\begin{thm}[Convergence of Optimal $\varphi$-Estimators] 
\label{Thm:Inference}
Under the standard assumptions $\Theta_{\tt{min}}$ is non-empty and compact.  
%
Moreover, if $(\theta_n)_{n \geq 1}$ is an optimal $\varphi$-estimation scheme, then 
$\hat{\theta}_n = \theta_n (y, \ldots, T^{n-1}y)$ converges to $\Theta_{\tt{min}}$ for $\nu$-almost every $y \in \cY$.
Conversely, for every $\theta_0 \in \Theta_{\tt{min}}$ there exists an optimal $\varphi$-estimation scheme that 
converges almost surely to $\theta_0$. 
\end{thm}


\vskip.1in

Theorem \ref{Thm:Inference} fully characterizes the limiting behavior of two stage inference.
In particular, it reduces questions about identifiability and consistency 
to the analysis of the set $\Theta_{\tt{min}}$, and the associated family $\cJ_{\tt{min}}(S:  \nu)$ of optimal joinings.  
In this way, the theorem facilitates the application of joining constructions and properties 
to the analysis of empirical risk minimization in statistics and machine learning, see Section \ref{Sect:Applications}
below.  We emphasize that the theorem places no restrictions on the relation 
between the observed and reference systems, which need not be the same.

In summary, the variational analysis presented above provides a starting point for 
the analysis of two-stage empirical risk minimization that may involve dynamical models. 
With these results it is possible to characterize the convergence of two-stage estimators 
in a variety of applications, as illustrated in the following section. 
Next steps would include rates of convergence or distributional convergence theorems.  
Results of this type, while desirable, will involve substantially stronger assumptions and substantial 
problem specific analyses, and are beyond the scope of the present paper.

\section{First examples of variational analysis} \label{Sect:Applications}

The two-stage inference framework may at first appear to be rather restrictive, 
as it involves fitting deterministic observations to deterministic models in the absence of noise,
a problem that is relatively rare in statistical theory and practice.
However, by appropriate choice of the observed system $(\cY,T,\nu)$, the reference system $(\cX,S)$, 
and the loss function $\ell(x,y)$, the framework and the results of Theorem \ref{Thm:Inference}
may be applied to a wide variety of inference problems 
involving empirical risk minimization and dependent observations.  In Sections \ref{Sect:MLE_intro}
and \ref{Sect:RegressionIntro} below, we show how maximum likelihood estimation and nonlinear regression 
can be placed within the framework of two-stage inference, and how Theorem \ref{Thm:Inference} can be used to
establish existing results on the consistency of empirical risk based procedures for these problems.

While these initial applications are illuminating, the two-stage inference framework and Theorem \ref{Thm:Inference} 
also provide powerful tools for analyzing new, more complex, statistical problems in which the observations
as well as the models of interest are dynamical in nature.   
In Section \ref{Sect:Fitting} we briefly review recent work of McGoff and Nobel \cite{McGoffNobel_applications}
on fitting families of dynamical models, which is based in part on the results of Theorem \ref{Thm:Inference}.
The full generality of the two-stage framework and the power of the variational characterization is illustrated 
in Section \ref{Sect:QuantizedIntro}, where we describe and analyze the problem of system identification from quantized trajectories.  
These results are of independent interest and, to the best of our knowledge, cannot be obtained by any existing statistical methods. 

\subsection{Maximum likelihood estimation under ergodic sampling} 
\label{Sect:MLE_intro}

Let $\cU$ be a Polish space, and let $\mathcal{P} = \{ f_{\theta} : \theta \in \Theta \}$ be a family of 
density functions $f_{\theta} : \cU \to [0,\infty)$ with respect to a fixed Borel measure $Q$ on $\cU$.
Let $\Theta$ be a compact metric space and assume that $(\theta, u) \mapsto f_{\theta}(u)$ 
is an upper semi-continuous map from $\Theta \times \cU$ to $\R$. 
Suppose that we observe the values of a stationary ergodic process $U_0, U_1 \ldots \in \cU$ and
wish to identify a density $f_{\theta} \in \cP$ that best approximates the one-dimensional marginal distribution of 
the observed process in the sense that
\begin{equation}
\label{opt-theta}
\E \, \log f_\theta (U) \ = \ \max_{\theta' \in \Theta}  \, \E \, \log f_{\theta'} (U) ,
\end{equation}
where $U$ has the same distribution as $U_0$.
Let $\theta_n: \cU^n \to \Theta$, $n \geq 1$, be measurable estimators that 
asypototically maximize the marginal log-likelihood 
\begin{equation} \label{Eqn:MLE_EstimatorIntro}
\lim_n \frac{1}{n} \sum_{i=0}^{n-1} \log f_{\hat{\theta}_n} (U_i)
\ = \ 
\lim_n \sup_{\theta \in \Theta} \frac{1}{n} \sum_{i=0}^{n-1} \log f_\theta (U_i)
\ \mbox{ wp1},
\end{equation}
where $\hat{\theta}_n = \theta_n(U_0,\ldots U_{n-1})$. 
The existence of measurable estimators satisfying (\ref{Eqn:MLE_EstimatorIntro}) 
follows from standard arguments, see Lemma \ref{Lemma:MeasurabilityOptimizers}.  
Note that the common distribution of the observations $U_i$ need not have a density in
$\mathcal{P}$ and need not be absolutely continuous with respect to the reference measure $Q$.

\vskip.1in

The problem described above can be expressed as a
two-stage inference procedure in the following way.
To begin, we represent the observed process $\{U_i\}_{i \geq 0}$ as a 
shift-based system $(\mathcal{Y},T,\nu)$ following the standard construction 
\cite{Breiman1992,Durrett2010} in which
$\mathcal{Y}$ be the sequence space $\mathcal{U}^{\N}$, $T$ is the left-shift on $\cY$, 
and $\nu$ is the measure on $\mathcal{U}^{\N}$ induced by $\{U_i\}$. 
Let the state space $\cX$ of the reference system be equal to the parameter space $\Theta$ and,
as the inference task involves no dynamics beyond those of the observations $U_i$, let 
$S$ be the identity map on $\Theta$.  Finally, let the loss function $\ell: \cX \times \cY \to \R$ be defined by
$\ell(\theta, (u_i)_{i \geq 0}) = - \log f_\theta (u_0)$, and let $\varphi: \cX \to \Theta$ be the identity map.
These correspondences are detailed in Table \ref{Table:MLEChoices}. 
A direct application of Theorem \ref{Thm:Inference} yields the following classical result,
which is similar to Theorem 5.14 of \cite{van2000asymptotic}.

\begin{center}
\begin{table}[h] 
\begin{tabular}{| c | c | }
\hline   
General setting & MLE under ergodic sampling \\ 
\hline \hline $\cX$ & $\Theta$ \\ 
\hline $S : \cX \to \cX$ & $\Id : \Theta \to \Theta$ \\
\hline $\cY$ & $\cU^{\N}$ \\
\hline $T : \cY \to \cY$ & Left shift $\tau$ on $\cU^{\N}$ \\
\hline $\nu$ & Measure of process $\{U_i\}_{i \geq 0}$ \\
\hline $c: \cX \times \cY \to \R$ & $(\theta, \mathbf{u}) \mapsto - \log p_\theta(u_0)$ \\
\hline $\Theta$ & $\Theta$ \\
\hline $\varphi : \cX \to \Theta$ & $\Id : \Theta \to \Theta$ \\
\hline 
\end{tabular}
\vspace{2mm}
\caption{Correspondence between objects in the general setting and objects in MLE under ergodic sampling.} \label{Table:MLEChoices}
\end{table}
\end{center}

\begin{thm} \label{Thm:MLEConsistency}
If $\mathbb{E} \sup_{\theta \in \Theta} | \log f_\theta(U) |$
is finite, then $\hat{\theta}_n$ 
converges almost surely to the non-empty set 
$\Theta_0 = \argmax_{\theta \in \Theta} \, \mathbb{E} \, \log f_\theta ( U )$.
\end{thm}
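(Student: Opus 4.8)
The plan is to obtain Theorem~\ref{Thm:MLEConsistency} as a direct consequence of Theorem~\ref{Thm:Inference} via the dictionary of Table~\ref{Table:MLEChoices}: take $\cX = \Theta$, $S = \Id$, $\cY = \cU^{\N}$, $T = \tau$ the left shift, $\nu$ the law of $\{U_i\}_{i\ge1}$, and $c(\theta,\mathbf u) = -\log p_\theta(u_1)$; for the inference layer take the parameter space to be $\Theta$ itself and $\varphi = \Id$, which is continuous and satisfies $\varphi\circ S = \varphi$ trivially. The first task is to verify the standard assumptions. Compactness and metrizability of $\cX = \Theta$, continuity of $S = \Id$, the Polish property of $\cY = \cU^{\N}$, Borel measurability of $\tau$, and invariance and ergodicity of $\nu$ are all immediate. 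Lower semicontinuity of $c$ follows because $(\theta, u)\mapsto p_\theta(u)$ is upper semicontinuous and $\mathbf u\mapsto u_1$ is continuous, so $(\theta,\mathbf u)\mapsto -\log p_\theta(u_1)$ is lower semicontinuous; the domination bound holds with $c^*(\mathbf u) = \sup_\theta|\log p_\theta(u_1)|$, and $c^* \in L^1(\nu)$ precisely because the first-coordinate marginal of $\nu$ is the law of $U$, so $\int c^*\, d\nu = \E\sup_\theta|\log p_\theta(U)| < \infty$ by hypothesis. One routine technical point arises here: to make $c$ genuinely real-valued (rather than $[-\infty,\infty]$-valued where some $p_\theta(u_1)=0$) I would first replace $\cY$ by the $T$-invariant Borel set $\cY' = \{\mathbf u : \sup_\theta|\log p_\theta(u_j)| < \infty \text{ for all } j \ge 1\}$, which has full $\nu$-measure (since $c^*$ is $\nu$-integrable, hence finite $\nu$-a.s., and $\nu$ is stationary) and on which $(\tau,\nu)$ remains ergodic; none of the quantities below are affected.

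The second task is to evaluate $\Theta_{min}$ from Definition~\ref{Defn:ThetaMin}. Since $\varphi = \Id$, for each $\theta$ the fibre $\cX_\theta = \varphi^{-1}\{\theta\} = \{\theta\}$ is a single point, so $(\cX_\theta, S_\theta)$ is the trivial one-point system; its only invariant measure is $\delta_\theta$, whence $\cJ(S_\theta : \nu) = \{\delta_\theta \otimes \nu\}$ and
\[
C(S_\theta : \nu) \;=\; \int c\, d(\delta_\theta \otimes \nu) \;=\; \int_{\cY} \bigl(-\log p_\theta(u_1)\bigr)\, d\nu(\mathbf u) \;=\; -\,\E\log p_\theta(U).
\]
Therefore $\Theta_{min} = \argmin_{\theta\in\Theta}\bigl(-\E\log p_\theta(U)\bigr) = \argmax_{\theta\in\Theta}\E\log p_\theta(U) = \Theta_0$, and Theorem~\ref{Thm:Inference} already yields that $\Theta_0$ is non-empty and compact.

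The third task is to recognize $\{\hat\theta_n\}$ as a $\varphi$-optimal estimation scheme. Because $S = \Id$, for $x = \theta$ one has $c(S^k x, T^k\mathbf u) = -\log p_\theta\bigl((\tau^k\mathbf u)_1\bigr) = -\log p_\theta(u_{k+1})$, so along the trajectory $\mathbf u, \tau\mathbf u, \dots, \tau^{n-1}\mathbf u$,
\[
\frac1n\sum_{k=0}^{n-1} c(S^k x, T^k\mathbf u) = -\frac1n\sum_{i=1}^{n}\log p_\theta(u_i),
\qquad
\inf_{x\in\cX}\frac1n\sum_{k=0}^{n-1} c(S^k x, T^k\mathbf u) = -\sup_{\theta\in\Theta}\frac1n\sum_{i=1}^n\log p_\theta(u_i).
\]
Define $f_n : \cY^n \to \cX$ by applying $\hat\theta_n$ to the first coordinates of its $n$ arguments; evaluated along a trajectory $\mathbf u, \tau\mathbf u, \dots, \tau^{n-1}\mathbf u$ those first coordinates are $u_1, \dots, u_n$, so $f_n(\mathbf u, \tau\mathbf u, \dots, \tau^{n-1}\mathbf u) = \hat\theta_n(u_1, \dots, u_n)$. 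Condition~(\ref{Eqn:RegressionEstimatorIntro}), which $\hat\theta_n$ satisfies with probability one, is then exactly the defining identity of an optimal tracking scheme for this cost, so $\{f_n\}$ is an optimal tracking scheme and $\hat\theta_n = \varphi\circ f_n$ is a $\varphi$-optimal estimation scheme. Theorem~\ref{Thm:Inference} now gives that $\hat\theta_n$ converges $\nu$-almost surely to $\Theta_{min} = \Theta_0$, which is the claim.

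I do not expect a genuine obstacle: once the dictionary is in place the argument is a verification that the MLE problem is an instance of the general framework, and the heavy lifting has been done in Theorem~\ref{Thm:Inference}. The only points requiring some care are the semicontinuity and $L^1$-domination of the cost together with the passage to the full-measure $T$-invariant subset on which $\log p_\theta(u_1)$ is finite for every $\theta$, and the elementary measurability and first-coordinate bookkeeping needed to exhibit $\hat\theta_n$ in the form $\varphi\circ f_n$.
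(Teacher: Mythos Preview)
Your proposal is correct and is exactly the argument the paper has in mind: the text preceding Theorem~\ref{Thm:MLEConsistency} states that the result follows from ``a direct application of Theorem~\ref{Thm:Inference}'' via the correspondence in Table~\ref{Table:MLEChoices}, and you have carried out precisely that verification. The only detail you add beyond the paper's one-line justification is the passage to a full-measure shift-invariant subset on which the cost is finite; this is a reasonable way to handle the possibility that $p_\theta(u_1)=0$, though note that the resulting $\cY'$ is only forward-invariant and need not itself be Polish, so in a fully rigorous write-up you might instead allow $c$ to take the value $+\infty$ (lower semicontinuity and the proofs of Theorems~\ref{Thm:GeneralConvergence} and~\ref{Thm:Inference} are unaffected since the $L^1$ domination forces $c$ to be finite $\lambda$-a.e.\ for every $\lambda\in\cJ(S:\nu)$).
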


Thus even in the misspecified setting, the empirical maximum likelihood 
estimators converge to the set of optimal parameters, namely those that best approximate the 
observed process in the sense of (\ref{opt-theta}).  If the supremum in the theorem fails to 
be measurable, one may replace the expectation there by an outer expectation.

\subsection{Nonlinear regression under ergodic sampling} 
\label{Sect:RegressionIntro}

Let $\cU$ be a Polish space, and let $\cF = \{ f_{\theta} : \theta \in \Theta \}$ be a 
family of functions $f_\theta: \cU \to \R$ indexed by a compact metric space 
$\Theta$ in such a way that $(\theta, u) \mapsto f_{\theta}(u)$ is a continuous 
map from $\Theta \times \cU$ to $\R$.  
Suppose that we observe a stationary ergodic process $(U_0, V_0), (U_1, V_1), \ldots \in \cU \times \R$  
and wish to identify a function $f_{\theta} \in \cF$ 
that best captures the marginal relationship between $U$ and $V$ in the sense that
\begin{equation}
\label{opt-theta}
\E \, \ell_0(f_{\theta}(U),V) \ = \ \min_{\theta' \in \Theta}  \, \E \, \ell_0(f_{\theta'}(U),V),
\end{equation}
where 
$\ell_0 : \R \times \R \to [0,\infty)$ is a lower semicontinuous loss function.  
Let $\theta_n: (\cU \times \R)^n \to \Theta$, $n \geq 1$, be 
measurable estimators that asymptotically minimize the average loss, namely
\begin{equation} \label{Eqn:RegressionEstimatorIntro}
\lim_n \frac{1}{n} \sum_{i=0}^{n-1} \ell_0(f_{\hat{\theta}_n}(U_i), V_i)
\ = \ 
\lim_n \inf_{\theta \in \Theta} \frac{1}{n} \sum_{i=0}^{n-1} \ell_0(f_{\theta}(U_i), V_i)
\ \mbox{ wp1},
\end{equation}
where $\hat{\theta}_n = \hat{\theta}_n((U_0,V_0),\ldots (U_{n-1}, V_{n-1}))$. 
This problem can readily be expressed as a
two-stage inference procedure;
Table \ref{Table:RegressionChoices} gives the details.  The following result is an easy consequence of 
Theorem \ref{Thm:Inference}.  The result can also be established by arguments based on uniform 
laws of large numbers.

\vskip.1in

\begin{thm} \label{Thm:RegressionConsistency}
If $\E \sup_{\theta \in \Theta} \ell_0(f_{\theta}(U), V)$ is finite
then $\hat{\theta}_n$ converges almost surely to the set 
$
\Theta_0 = \argmin_{\theta \in \Theta} \, \mathbb{E} \, \ell_0( f_{\theta}(U),V)
$.
\end{thm}

%

\begin{center}
\begin{table}[h] 
\begin{tabular}{| c | c | }
\hline   
General setting & Regression under ergodic sampling \\ 
\hline \hline $\cX$ & $\Theta$ \\ 
\hline $S : \cX \to \cX$ & $\Id : \Theta \to \Theta$ \\
\hline $\cY$ & $(\cU \times \R)^{\N}$ \\
\hline $T : \cY \to \cY$ & Left shift $\tau$ on $(\cU \times \R)^{\N}$ \\
\hline $\nu$ & Measure of process $(U_i,V_i)_{i \geq 0}$ \\
\hline $\ell: \cX \times \cY \to \R$ & $(\theta, (\mathbf{u},\mathbf{v})) \mapsto \ell_0(f_{\theta}(u_0),v_0)$ \\
\hline $\Theta$ & $\Theta$ \\
\hline $\varphi : \cX \to \Theta$ & $\Id : \Theta \to \Theta$ \\
\hline 
\end{tabular}
\vspace{2mm}
\caption{Correspondence between objects in the general setting and objects in the regression under ergodic sampling section.} \label{Table:RegressionChoices}
\end{table}
\end{center}

\subsection{Fitting dynamical models} \label{Sect:Fitting}

Here we provide a brief overview of some recent results on fitting dynamical models to ergodic processes \cite{McGoffNobel_applications}. These results use the variational analysis of the present work as a starting point for a detailed analysis of a specific inference problem. In contrast to the two previous examples (maximum likelihood estimation and nonlinear regression), the underlying model family in this application is complex and captures interesting dynamical behavior.

A dynamical model consists of a continuous transformation $f : K \to K$ on a compact metric space $K$ and a continuous observation function 
$g : K \to \mathbb{R}^d$. The specific inference problem considered in \cite{McGoffNobel_applications} involves fitting a family of dynamical models to observations from a stationary ergodic process. Let $\Theta$ be a compact metric space, and let 
$\mathcal{D} = \{ (f_{\theta},g_{\theta}) : \theta \in \Theta \}$ be an indexed family of dynamical models on a common state space $K$ such that both $(\theta,x) \mapsto f_{\theta}(x)$ and $(\theta,x) \mapsto g_{\theta}(x)$ are continuous. Examples of natural families of dynamical models include the family of toral rotations and the logistic family (see \cite{Katok}). 
The family $\mathcal{D}$ is meant to capture regularities (deterministic patterns) of interest, and the goal of the inference 
is to identify these regularities or patterns in an observed ergodic process $\mathbf{U}= U_0,U_1,\dots \in \R^d$ 
by fitting the values of the process with models in $\mathcal{D}$.  This type of fitting occurs frequently in fields such as systems biology and ecology, see for example \cite{Brackley2010,Letham2016,Levin2009,McGoff2016_LEM,Turchin2013}. 

The main results of \cite{McGoffNobel_applications} concern the asymptotic behavior of parameter estimates that arise 
from fitting dynamical models in $\mathcal{D}$ to $\mathbf{U}$.  In particular, the results establish that if the model 
class $\mathcal{D}$ has low complexity, then empirical risk based fitting procedures are asymptotically consistent in 
signal plus noise settings.
Theoretical analysis of model fitting begins with the variational analysis of Theorem \ref{Thm:Inference}
and makes use of the theory and construction of joinings.

\section{System identification from quantized trajectories} 
\label{Sect:QuantizedIntro}



As a final application of the variational analysis techniques in Section \ref{Sect:Tracking_Intro}, 
we consider the problem of system identification from quantized trajectories.
As in the fitting of dynamical models, the model family in this problem is complex and 
captures dynamical behavior.

Discretizing the state space of a dynamical system is common in both theoretical and applied settings. 
On the theoretical side, studying systems through their quantized trajectories has been a core idea in ergodic theory 
since the theory of entropy was developed by Kolmogorov, Ornstein, and others (see the books \cite{Petersen1983,Walters2000}). 
Moreover, discretization of trajectories occurs frequently in applied settings as a way of coarse graining the state space, 
accommodating computation, and meeting data transmission and storage requirements. For example, many mathematical models of gene regulatory network dynamics involve discretizing the gene expression levels \cite{Karlebach2008}. Also, many numerical methods for analyzing dynamical systems begin by discretizing the state space; examples include methods based on Conley index theory \cite{Mischaikow1999,Mischaikow2002}, Ulam's method \cite{Bose2001}, or finite element method \cite{Dhatt2012}. 
To our knowledge, the results of this section represent the first detailed and rigorous statistical analysis of inference for
dynamical systems based on discretized observations.  
Our analysis makes use of relatively independent joinings, as well as the 
classical result of Furstenberg concerning the disjointness of zero entropy and Bernoulli systems (described in detail below).  

Let $\cU$ be a Polish space and let $\cR = \{ R_{\theta} : \theta \in \Theta \}$ be 
a family of Borel measurable transformations $R_{\theta} : \cU \to \cU$ indexed by a 
compact metric space $\Theta$ in such a way that the map $(\theta,u) \mapsto R_{\theta}(u)$ 
is a Borel measurable function from $\Theta \times \cU$ to $\cU$.
Let $\{A_0, A_1\}$ be a known, measurable partition of $\cU$, and let 
$\pi: \cU \to \{ 0,1 \}$ be the associated label function defined by $\pi(u) = j$ if $u \in A_j$.   
For each parameter $\theta \in \Theta$ and element $u \in \cU$ there is an associated trajectory 
$u, R_\theta \, u, R_\theta^2 \, u, \ldots$ arising from repeated application of $R_\theta$.  
Application of the map $\pi$ gives rise to a corresponding binary label sequence 
\[
\lab(\theta,u) \ = \ (\pi(u), \pi(R_\theta u), \pi(R_\theta^2 u), \ldots) \, \in \, \{0,1\}^\N .
\]
Of interest here is whether, and in what sense, we can estimate the
parameter $\theta$ from noisy observations of the label sequence $\lab(\theta,u)$
when the state $u$ is drawn from an invariant ergodic measure for $R_\theta$.  

In more detail, we assume that observations take the form of a binary stochastic process
\begin{equation}
\label{eqn:qom}
Y_k \ = \ \pi(R_{\theta_0}^k U) \oplus \varepsilon_k, \ \ \ \ k \geq 0.
\end{equation}
Here $\theta_0 \in \Theta$ is the parameter of the underlying transformation, 
$U$ is a $\cU$-valued random variable whose 
distribution is invariant and ergodic under $R_{\theta_0}$, $\{ \varepsilon_k \}$ is a sequence of
independent Bernoulli$(p)$ random variables independent of $U$, and
$\oplus$ denotes addition modulo 2.
Thus $Y_k$ is equal to the label of $R_{\theta_0}^k U$ perturbed by noise.
Standard arguments ensure that $\{ Y_k : k \geq 0 \}$ is stationary and ergodic. 
Both the setting and the results of this section may be generalized to allow 
arbitrary finite measurable partitions and more general noise distributions (see \cite{McGoffNobel_arxiv}); 
we consider the case of binary observations for clarity of presentation.

Let $\theta_n: \{0,1\}^n \to \Theta$, $n \geq 1$, be measurable estimators of $\theta_0$
that approximately minimize average Hamming (0-1) risk, 
\begin{eqnarray} 
\label{Eqn:DiscreteEstimator}
\lefteqn{ 
\lim_n \, \inf_{u \in \cU} \, 
\frac{1}{n} \sum_{k=0}^{n-1} \mathbb{I}\bigl( \pi \bigl(R_{\hat{\theta}_n}^k u \bigr) \neq Y_k \bigr) 
} \\
& = &
\lim_n \, \inf_{\theta \in \Theta} \inf_{u \in \cU} \, 
\frac{1}{n} \sum_{k=0}^{n-1} \mathbb{I}\bigl( \pi\bigl(R_{\theta}^k u\bigr) \neq Y_k \bigr) 
\ \mbox{ wp1},
\nonumber
\end{eqnarray}
where $\hat{\theta}_n = \theta_n(Y_0,\dots,Y_{n-1})$.
The existence of measurable estimators satisfying (\ref{Eqn:DiscreteEstimator}) 
is guaranteed by Lemma \ref{Lemma:MeasurabilityOptimizers}.  We are interested
in the limiting behavior of $\hat{\theta}_n$.

The estimation problem above can be expressed as a two stage inference procedure.
As in the previous examples, the observation process $\{ Y_k \}_{k \geq 0}$ can be represented as a measure 
preserving system $(\mathcal{Y},T,\nu)$, where $\mathcal{Y}$ is the sequence space $\{ 0,1 \}^{\N}$, 
$T$ is the left-shift on $\{ 0,1 \}^{\N}$, and $\nu$ is the measure of the process $\{ Y_k \}_{k \geq 0}$.  
Specification of the reference system and loss require more care.
Let the state space of the reference system be the closure of all (parameter, label-sequence) pairs,
\[
\cX = 
\mbox{cl} \bigl\{ (\theta, \lab(\theta,u)) : \theta \in \Theta, u \in \mathcal{U} \bigr\} \, \subseteq \, \Theta \times \{0,1\}^{\N},
\]
where $\mbox{cl} A$ denotes the closure of $A$ and we assume that $\{0,1\}^{\N}$ is equipped with the usual
product topology.  Thus $\cX$ is compact, and we let the reference transformation $S$ be the restriction to $\cX$ of
the product $\mbox{id}_\Theta \times \tau$, where $\mbox{id}_\Theta$ is the identity on $\Theta$ and 
$\tau$ is the left shift on $\{0,1\}^{\N}$.  It is easy to see that $\cX$ is invariant under $S$.
We further define the loss $\ell((\theta, \mathbf{a}), \mathbf{b}) = \mathbb{I}( a_0 \neq b_0)$, and we let
the parameter map $\varphi$ be the projection onto $\Theta$, namely $\varphi (\theta, \mathbf{a}) = \theta$.
These correspondences are summarized in Table \ref{Table:DiscreteChoices}. 

Theorem \ref{Thm:Inference} characterizes the limiting behavior of the parameter 
estimates $\hat{\theta}_n$. 
Let $\nu_0$ be the distribution of the true label process $\{ \pi(R_{\theta_0}^k U) : k \geq 0 \}$ on 
$\{0,1\}^\N$.  Note that $\nu_0$ is not equal to the process measure $\nu$ of $\{Y_k\}$ if the 
noise level $p > 0$.  
For each $\theta \in \Theta$ let $\cX_\theta = \{ \mathbf{a} : (\theta, \mathbf{a}) \in \cX \}$ be 
the $\theta$-section of $\cX$, and define
\begin{equation} \label{Eqn:Puma}
\Theta_1 = \bigl\{ \theta \in \Theta : \nu_0(\cX_{\theta}) = 1 \bigr\} 
\end{equation}
to be the set of parameters for which $\cX_\theta$ supports the true label 
process. It is easy to see that $\theta_0 \in \Theta_1$, so $\Theta_1$ is non-empty. 

An equivalent way of viewing the parameter estimator $\theta_n$ is that it picks a 
$\theta$ such that some sequence in $\mathcal{X}_{\theta}$ provides the best Hamming match 
to the observations $Y_0,\dots,Y_{n-1}$. In other words, if $\{ \theta_n \}_{n \geq 1}$ 
satisfies (\ref{Eqn:DiscreteEstimator}), then
\[
\lim_n \, \inf_{\mathbf{a} \in \cX_{\hat{\theta}_n}} \, 
\frac{1}{n} \sum_{k=0}^{n-1} \mathbb{I}\bigl( a_k \neq Y_k \bigr)  \\
 = 
\lim_n \, \inf_{\theta \in \Theta} \inf_{\mathbf{a} \in \cX_{\theta}} \, 
\frac{1}{n} \sum_{k=0}^{n-1} \mathbb{I}\bigl( a_k \neq Y_k \bigr) \ \mbox{ wp1}
\]
where $\hat{\theta}_n = \theta_n(Y_0,\dots,Y_{n-1})$.
Thus $\Theta_1$ is a natural identifiability class in the absence 
of noise ($p=0$). (See Remark \ref{Rmk:Identifiability} below for more discussion of identifiability.)  

In fact, $\Theta_1$ is also an identifiability class when noise is present ($p > 0$), provided there are 
complexity constraints on the family of transformations $\cR$.  To quantify these constraints,
let $\cL$ be the closure (in $\{0,1\}^{\N}$) of the set of all label sequences 
$\{ \lab(\theta,u) : \theta \in \Theta, \, u \in \cU \}$ of the transformations in $\cR$.  Let
\begin{equation} 
\label{Eqn:Htop}
h(\cR) \ = \ \lim_n \frac{1}{n} \log \# \bigl\{ a_0^{n-1} \in \{0,1\}^n : \mathbf{a} \in \cL \bigr\},
\end{equation}
be the exponential growth rate of the number of distinct labeled trajectories of length $n$.  
A more detailed discussion of $\Theta_1$ and $h(\cR)$ can be found in Section \ref{Sect:DiscreteObservations}, which
also contains the proof of the following theorem.

\begin{center}
\begin{table}[h]  
\begin{tabular}{| c | c | }
\hline   
General setting & Quantized observations \\ 
\hline \hline $\cX$ & $\mbox{cl} \{ (\theta, \lab(\theta,u)) : \theta \in \Theta, u \in \mathcal{U} \}$ \\ 
\hline $S : \cX \to \cX$ & $\mbox{id}_\Theta \times \tau$ restricted to $\cX$ \\
\hline $\cY$ & $\{0,1\}^{\N}$ \\
\hline $T : \cY \to \cY$ & Left shift $\tau$ on $\{0,1\}^{\N}$ \\
\hline $\nu$ &  Measure of process $\{Y_k\}_{k \geq 0}$ \\
\hline $\ell: \cX \times \cY \to \R$ & $((\theta, \mathbf{a}), \mathbf{b}) \mapsto \mathbb{I}( a_0 \neq b_0)$ \\
\hline $\varphi : \cX \to \Theta$ & $(\theta,\mathbf{a}) \mapsto \theta$ \\
\hline
\end{tabular}
\vspace{2mm}
\caption{Correspondence between objects in the general setting and objects in the estimation of a transformation with quantized observations.} \label{Table:DiscreteChoices}
\end{table}
\end{center}
%

%
%
%

\vskip.1in

\begin{thm} \label{Thm:QuantConsist}
Let $\{ \theta_n \}$ be a sequence of estimators satisfying 
(\ref{Eqn:DiscreteEstimator}).  If either
\begin{enumerate}

\vskip.1in

\item $p=0$ or

\vskip.1in

\item $0 < p < 1/2$ and $h(\cR) = 0$,

\vskip.12in

\end{enumerate}
then $\hat{\theta}_n$ converges almost surely to $\Theta_1$.  Conversely, for every 
$\theta_0 \in \Theta_1$ there exists a sequence of estimators $\{ \theta_n \}$ satisfying
(\ref{Eqn:DiscreteEstimator}) that converges almost surely to $\theta_0$. 

\end{thm}

\vskip.1in

Note that the limit set $\Theta_1$ is the same for both the noisy and noise-free settings. 
Similar results hold when trajectories are quantized by arbitrary finite partitions and subject to more general noise,  
see \cite{McGoffNobel_arxiv}. 
Note also that Theorem \ref{Thm:QuantConsist} holds without continuity assumptions on the transformations 
$R_{\theta}$ or their indexing by $\theta$. 
However, the topology of $\Theta$ does play an important role in this result, as it affects the closure operation 
that defines $\cX$, which in turn is used to define $\Theta_1$.

\begin{rmk} \label{Rmk:Identifiability}
The identifiability class $\Theta_1$ may be significantly larger than the true parameter $\theta_0$. 
For example, if $\theta_1$ is a parameter such that $\cX_{\theta_1} = \{0,1\}^{\N}$, then $\cX_{\theta_1}$ 
supports all invariant probability measures, in which case $\theta_1$ is certainly contained in $\Theta_1$, 
regardless of the true value of $\theta_0$.  Moreover, note that $\cX_{\theta}$ is defined \textit{after} 
taking the closure to form $\cX$.  Thus, it may happen that $\cX_{\theta}$ contains binary sequences 
that do not appear as the label sequence of {\em any} trajectory from $R_{\theta}$, but are instead limits
of such trajectories. This closure may also cause $\Theta_1$ to contain parameters other than $\theta_0$. (See Proposition \ref{Prop:Identifiability} for a sufficient condition that guarantees that $\Theta_1 = \{\theta_0\}$.) Theorem \ref{Thm:QuantConsist} nonetheless ensures convergence of empirical risk based
estimates to the identifiability class $\Theta_1$, and the converse part of the theorem shows that no smaller set would satisfy the same conclusion.  Identification of $\Theta_1$ can be carried out within 
the variational framework, as the next example shows, but additional analysis is required. 
\end{rmk}

\subsubsection{Circle Rotations}
\label{Ex:CircleRotations}

As a non-trivial application of the results in this section, consider the family 
$\cR$ of circle rotations $R_\alpha: [0,1) \to [0,1)$ defined by 
$R_{\alpha}(x) = x + \alpha \mbox{ mod } 1$, with $\alpha \in \Theta = [0,1/2]$. 
If $\alpha$ is rational with reduced form $m/n$, then $R_{\alpha}^n = \Id$, 
each orbit contains exactly $n$ distinct points, and each ergodic measure is supported on a single orbit. 
On the other hand, if $\alpha$ is irrational, then it is known that Lebesgue measure is the only ergodic Borel 
probability measure for $R_{\alpha}$. 
Consider the partition of $[0,1)$ into sets $A_0 = [0,\frac{1}{2})$ and $A_1 = [\frac{1}{2},1)$. 
The proof of the following result appears in Section \ref{Sect:DiscreteObservations}.



\begin{prop} 
\label{Prop:CircleRotations}
Let $Y_i = \pi(R_{\alpha_0}^i U) \oplus \epsilon_i$, where $\alpha_0 \in [0,1/2]$, the distribution of $U$ 
is invariant and ergodic under $R_{\alpha_0}$, and $\{\epsilon_i\}_{i \geq 0}$ is an independent 
sequence of Bernoulli$(p)$ random variables that is independent of $U$.
If $p < 1/2$ and the estimates $\hat{\theta}_n$ satisfy 
(\ref{Eqn:DiscreteEstimator}), then $\hat{\theta}_n$ converges
almost surely to $\alpha_0$.
\end{prop}



\section{Related work}

A number of recent papers have considered prediction and inference from dynamical systems that evolve deterministically over time, e.g.,  \cite{Hang2016,Hang2016_2,Kutoyants2013,Lalley1999,LalleyNobel2006,McGoff2015,Steinwart2009}. 
While the variational framework considered here focuses on the problem of parameter estimation, the recent work cited above has focused on different aspects of statistical inference. For example, Hang and Steinwart and Steinwart and Anghel  \cite{Hang2016,Hang2016_2,Steinwart2009} give some results about forecasting dynamical systems with specified mixing rates, and Lalley and Nobel \cite{Lalley1999,LalleyNobel2006} establish both positive and negative results for filtering problems in the context of certain dynamical systems. 
For additional references and discussion, see the recent survey on statistical inference  for dynamical systems \cite{McGoffSurvey2015}.

In several applied fields, there is interest in fitting parametrized families of dynamical systems to observations. Indeed, there are examples in ecology \cite{Levin2009,Turchin2013}, geophysical modeling \cite{Bennett2005,Kalnay2003}, systems biology \cite{Brackley2010,Letham2016,McGoff2016_LEM}, and data assimilation \cite{Law2015}. As explained in greater detail in \cite{McGoffNobel_applications}, the variational approach taken here may be useful in analyzing the fitting methods in settings such as these.

Although independence assumptions are common in the statistics and machine learning literature,
there has been long-standing interest, both theoretical and applied, in the analysis of observations
exhibiting long-range dependence.  Representative recent work can be found in
\cite{Adams2010,Alquier2012,Hang2016,Kuznetsov2016,Morvai2005_2,Zimin2016}.
Most work in this area considers rates of convergence and finite sample bounds. 
As noted at the end of Section \ref{Sect:Translation}, such results in the general setting considered here would require substantial additional analysis.

In ergodic theory, Ornstein and Weiss \cite{OrnsteinWeiss1990} considered finitary estimation of a stationary ergodic process from samples of the process. They proposed a specific estimation scheme 
and characterized when the scheme produces consistent estimates of the observed process.
Interestingly, consistent estimation may be possible for restricted classes of systems or processes, as we show in some of our results, despite the fact that consistency is impossible for larger classes of processes (as shown by Ornstein and Weiss). Other related work in ergodic theory concerns finitary estimation of $k$-dimensional distributions for growing $k$ \cite{Marton1994} and finitary estimation of isomorphism invariants \cite{Gutman2008,OrnsteinWeiss2007}.

The minimal expected loss $L(S: \nu)$ and the set of optimal joinings $\cJ_{min}(S: \nu)$ defined here have close analogies in the study of optimal transport; see the book by Villani \cite{Villani2008}.  
Indeed, one of the main goals in optimal transport is to describe the properties of optimal couplings, that is, couplings that achieve the infimum of the expected cost.
Such optimal couplings are analogous to the optimal joinings in $\cJ_{min}(S: \nu)$.
In some cases, notably in the case of Ornstein's $\dbar$-metric and its generalizations in ergodic theory and information theory (see the work of Gray, Neuhoff and Shields \cite{Gray1975} and the book of Gray \cite{Gray2011}), the measures $\mu$ and $\nu$ are taken to be process measures, and the couplings are required to be joinings.
While similar in spirit, our results are distinct from this previous work, since we consider the family of joinings between a topological dynamical system and a measure-preserving system and we focus on applications to inference.

In the special case that the loss function does not depend on the observed trajectory, the tracking part of our two-stage procedure reduces to the problem of ergodic optimization, which has received considerable attention in the mathematical literature in recent years (see the survey of Jenkinson \cite{Jenkinson2006} for a thorough introduction to the topic). 
For some recent results, see the work of Quas and Seifken \cite{Quas2012} and references therein.  

\subsection{Organization of the rest of the paper}

The next section provides some background notation and preliminary lemmas needed for the proofs of the main results.  
Theorems \ref{Thm:GeneralConvergence} 
and \ref{Thm:Inference} are established in Sections \ref{Sect:TrackingLemma} 
and \ref{Sect:GeneralResults}, respectively.
Section 
\ref{Sect:DiscreteObservations} contains
the proofs of consistency for the quantized observation problem presented above.
Appendix \ref{Sect:SetOfOptimalJoinings} contains material on the set of optimal joinings, including the proof of Theorem \ref{Thm:BasicPropertiesOfJmin}.

\section{Definitions and background}
\label{Sect:DefBack}

In this section we provide several preliminary definitions and facts required for the principal results of the paper. 

\subsection{Dynamical systems and spaces of measures}

All topological spaces considered in this paper are Polish (separable and completely metrizable). 
We endow any such space with its Borel $\sigma$-algebra and suppress this choice in our notation. 
Let $\cU$ be a Polish space.  
Following standard notation, $\cM(\cU)$ will denote the space of Borel probability measures on $\cU$ endowed with the usual weak topology, under which $\cM(\cU)$ is itself a Polish 
space.  
Recall that if $R : \cU \to \cU$ is a measurable transformation, then $\cM(\cU,R)$ denotes the set of measures $\mu \in \cM(\cU)$ that are invariant under $R$.
If $h : \cU \to \cV$ is measurable, then we define the ``push-forward'' map $\pf{\cdot}{h} : \cM(\cU) \to \cM(\cV)$ by $\pf{\eta}{h} = \eta \circ h^{-1}$.
If $\cU$ is a non-empty compact metric space and $R$ is continuous, then we refer to $(\cU,R)$ as a topological dynamical system.
It is well-known that in this case, $\cM(\cU,R)$ is non-empty and compact in the weak topology (see \cite{Walters2000}).

\subsection{Product spaces and the shift map}

The canonical projections of a product space $\cU \times \cV$ onto its constituent sets will be denoted by $\proj_{\cU}$ and $\proj_{\cV}$.
If $\lambda$ is a measure on $\cU \times \cV$, then its marginal distributions on $\cU$ and $\cV$ will be denoted by $\pf{\lambda}{\cU}$ and $\pf{\lambda}{\cV}$, respectively.
In several places throughout the paper we will consider infinite product spaces of the form $\cU^{\N}$, where $\cU$ is a Polish space.  
In each case $\cU^{\N}$ is endowed with its product topology and associated Borel sigma-field; elements of $\cU^{\N}$ are denoted as sequences $\mathbf{u} = (u_i)_{i \geq 0}$.
For any product space $\cU^{\N}$ the left-shift map $\tau : \cU^{\N} \to \cU^{\N}$ is defined 
by $\tau(u_0, u_1,\ldots) = (u_1, u_2, \ldots)$. 
Note that $\tau$ is continuous in the product topology. 
For any sequence $(u_l)_{l \geq 0}$ and $0 \leq i \leq j$, we define $u_i^j = (u_i, \ldots, u_j)$.

\subsection{The process generated by a measure-preserving system} \label{Sect:Process}

Let the dynamical systems $(\cX, S)$ and $(\cY, T, \nu)$ satisfy the standard assumptions (stated in Section \ref{Sect:Tracking}).  
By definition of the left shift $\tau$, any probability measure $\tilde{\nu} \in \cM(\cY^{\N}, \tau)$ is the distribution of a one-sided stationary process with values in $\cY$.
We will say that a measure $\tilde{\nu} \in \cM(\cY^{\N}, \tau)$ is {\it generated} by the system $(\cY, T, \nu)$ if the one-dimensional marginal distribution of $\tilde{\nu}$ is $\nu$, and if $\tilde{\nu}$ is supported on trajectories of $T$ in the sense that 
\begin{equation} 
\label{Eqn:Dilworth}
\tilde{\nu} \bigl(\{ \mathbf{y} \in \cY^{\N} : y_{i+1} = T y_i \mbox{ for } i \geq 0 \} \bigr) = 1.
\end{equation}  
The following technical lemma, which relates $\cJ(S: \tilde{\nu})$ to $\cJ(S: \nu)$, will be used in several proofs. 

\vskip.1in

\begin{lemma} \label{Lemma:GoHeels}
If $\tilde{\nu} \in \cM(\cY^{\N}, \tau)$ is generated by the system $(\cY, T, \nu)$, then $m_{\cX \times \cY}(\cJ(S: \tilde{\nu})) = \cJ(S: \nu)$.
\end{lemma}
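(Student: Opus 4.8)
Write $\psi : \cY \to \cY^{\N}$ for the trajectory map $\psi(y) = (y, Ty, T^2 y, \ldots)$, let $q = \Id_{\cX} \times e_0 : \cX \times \cY^{\N} \to \cX \times \cY$ be the map $(x,\mathbf{y}) \mapsto (x, y_0)$, which is the one inducing $m_{\cX \times \cY}$, and let $\Phi = \Id_{\cX} \times \psi : \cX \times \cY \to \cX \times \cY^{\N}$, $(x,y) \mapsto (x, \psi(y))$. All three maps are Borel measurable (each coordinate of $\psi$ is a power of the Borel map $T$, and a map into a product of Polish spaces is Borel iff each coordinate is). Two identities will be used repeatedly: $q \circ \Phi = \Id$ on $\cX \times \cY$, and $\tau \circ \psi = \psi \circ T$, hence $(S \times \tau) \circ \Phi = \Phi \circ (S \times T)$ on all of $\cX \times \cY$. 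The one nonlinear input is that, by the very definition of ``$\tilde\nu$ is generated by $(\cY,T,\nu)$'', the measure $\tilde\nu$ is concentrated on $\{\mathbf{y} : y_{i+1} = T y_i \text{ for } i \geq 0\}$, so $\tilde\nu$-a.e.\ $\mathbf{y}$ satisfies $\mathbf{y} = \psi(y_0)$; combined with $(e_0)_* \tilde\nu = \nu$ this yields $\tilde\nu = \psi_* \nu$. I will use throughout the characterization recorded after the definition of $\cJ(S:\nu)$: for a $\tau$-invariant $\tilde\nu$, $\cJ(S:\tilde\nu)$ is exactly the set of Borel probability measures on $\cX \times \cY^{\N}$ that are $S \times \tau$-invariant and have $\cY^{\N}$-marginal $\tilde\nu$ (the $\cX$-marginal being automatically $S$-invariant by pushing the invariance through $\proj_{\cX}$).

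\textbf{The inclusion $m_{\cX \times \cY}(\cJ(S:\tilde\nu)) \subseteq \cJ(S:\nu)$.} Let $\Lambda \in \cJ(S:\tilde\nu)$ and set $\lambda = q_* \Lambda$. Since $\proj_{\cY} \circ q = e_0 \circ \proj_{\cY^{\N}}$, the $\cY$-marginal of $\lambda$ is $(e_0)_* \tilde\nu = \nu$. For $S \times T$-invariance, note $q((S\times\tau)(x,\mathbf{y})) = (Sx, y_1)$ while $(S\times T)(q(x,\mathbf{y})) = (Sx, Ty_0)$, so $q \circ (S \times \tau)$ and $(S \times T) \circ q$ agree on the set $\{y_1 = Ty_0\}$, which has full $\Lambda$-measure because the $\cY^{\N}$-marginal of $\Lambda$ is $\tilde\nu$ and the event depends only on $\mathbf{y}$. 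Since a push-forward depends on the map only up to a $\Lambda$-null set, $(S\times T)_* \lambda = (S\times T)_* q_* \Lambda = q_* (S\times\tau)_* \Lambda = q_* \Lambda = \lambda$. Thus $\lambda$ is an $S \times T$-invariant measure with $\cY$-marginal $\nu$, hence $\lambda \in \cJ(S:\nu)$.

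\textbf{The inclusion $\cJ(S:\nu) \subseteq m_{\cX \times \cY}(\cJ(S:\tilde\nu))$.} Let $\lambda \in \cJ(S:\nu)$ and set $\Lambda = \Phi_* \lambda$, a Borel probability measure on $\cX \times \cY^{\N}$. Its $\cY^{\N}$-marginal is $(\proj_{\cY^{\N}} \circ \Phi)_* \lambda = \psi_* (\proj_{\cY})_* \lambda = \psi_* \nu = \tilde\nu$. It is $S \times \tau$-invariant because $(S\times\tau)_* \Lambda = (S\times\tau)_* \Phi_* \lambda = \Phi_* (S\times T)_* \lambda = \Phi_* \lambda = \Lambda$, using $(S\times\tau)\circ\Phi = \Phi\circ(S\times T)$ (which holds everywhere, not just a.e.) and the $S\times T$-invariance of $\lambda$. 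Hence $\Lambda \in \cJ(S:\tilde\nu)$, and $q_* \Lambda = (q\circ\Phi)_* \lambda = \lambda$, so $\lambda = m_{\cX \times \cY}(\Lambda)$. Combining the two inclusions proves the lemma.

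\textbf{Main obstacle.} There is no deep difficulty here; the proof is essentially a dictionary between measures on $\cX \times \cY$ and on $\cX \times \cY^{\N}$ via the trajectory map. The only point requiring care is the bookkeeping of which commutation relations hold identically and which hold merely $\tilde\nu$-almost everywhere: the identity $q \circ (S\times\tau) = (S\times T)\circ q$ fails pointwise and is recovered only because $\tilde\nu$ is supported on genuine $T$-trajectories, which is precisely where the hypothesis that $\tilde\nu$ is generated by $(\cY,T,\nu)$ is consumed (both there and in the identification $\tilde\nu = \psi_*\nu$).
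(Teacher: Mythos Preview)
Your proof is correct and follows essentially the same approach as the paper's. The paper phrases the argument in terms of test functions (defining $f_0(x,y)=f(x,(y,Ty,\ldots))$ and checking $\int f\circ(S\times\tau)\,d\tilde\lambda=\int f_0\circ(S\times T)\,d\eta$), whereas you package the same idea more cleanly via the push-forward maps $\Phi=\Id_\cX\times\psi$ and $q=\Id_\cX\times e_0$, but the underlying construction (lifting via the trajectory map and the a.e.\ commutation $q\circ(S\times\tau)=(S\times T)\circ q$ on the set where $y_1=Ty_0$) is identical.
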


\begin{proof}
Let $\tilde{\nu} \in \cM(\cY^{\N})$ be generated by $(T,\nu)$.  
Let $\tilde{\lambda} \in \cM(\cX \times \cY^{\N})$ have marginal distribution $\tilde{\nu}$ on $\cM(\cY^{\N})$ and marginal distribution $\eta$ on $\cX \times \cY$.  
Let $f : \cX \times \cY^{\N} \to \R$ be bounded and measurable, and define an associated bounded, measurable function $f_0: \cX \times \cY \to \R$ by $f_0(x,y) = f(x, (y, Ty, \ldots))$.  
Using (\ref{Eqn:Dilworth}), one may verify that
\begin{equation}
\label{lambda-eta}
\int f \, d\tilde{\lambda} \ = \ \int f_0 \, d\eta,
\end{equation}
and for $\tilde{\lambda}$-almost every $(x,\mathbf{y})$,
\begin{align}
\label{f-and-f0}
f \circ (S \times \tau) (x,\mathbf{y})
& \, = \, 
f_0(S x, y_1) 
\, = \, 
f_0 \circ(S \times T) (x, y_0).
\end{align}

Now suppose that $\tilde{\lambda} \in \cJ(S: \tilde{\nu})$ has marginal distribution $\eta$ on $\cX \times \cY$.  
Let $h : \cX \times \cY \to \R$ be a bounded measurable function, and define $f(x, \mathbf{y}) = h(x,y_0)$.  
It then follows from (\ref{lambda-eta}), (\ref{f-and-f0}), and the invariance of $\tilde{\lambda}$ under $S \times \tau$ that $\int h \circ(S \times T) \, d\eta = \int h \, d\eta$.  
As $h$ was arbitrary, $\eta$ is $S \times T$ invariant. 
Furthermore, it is easy to see that the $\cY$-marginal of $\eta$ is $\nu$.  
Thus, $\eta \in \cJ(S: \nu)$.

To establish the other direction, suppose that $\eta \in \cJ(S: \nu)$, and define $\tilde{\lambda} \in \cM(\cX \times \cY^{\N})$ to be the distribution of $(X, Y_0, TY_0, \ldots)$, where $(X, Y_0) \sim \eta$.  
It is clear that $m_{\cX \times \cY}(\tilde{\lambda}) = \eta$ and that the marginal distribution $\tilde{\nu}$ of $\tilde{\lambda}$ on $\cY^{\N}$ is generated by $(T, \nu)$.  
Moreover, it follows from (\ref{lambda-eta}), (\ref{f-and-f0}), and the invariance of $\eta$ under $S \times T$ that $\tilde{\lambda}$ is invariant under $S \times \tau$.
Thus, $\tilde{\lambda} \in  \cJ(S: \tilde{\nu})$, as desired.
\end{proof}

\subsection{A genericity lemma}

The following lemma is standard when $\cU$ is compact (\textit{e.g.}, see \cite{DGS}).  One may reduce the more general case
of interest here to the compact case using the regularity of $\mu$, the separability of $C(K)$ for any compact $K$, 
and the pointwise ergodic theorem. As the argument is straightforward, we omit the proof. 

\begin{lemma} \label{Lemma:GenericPoints}
Suppose $\cU$ is a Polish space, equipped with the Borel $\sigma$-algebra, $R : \cU \to \cU$ is measurable, and $\eta$ is a Borel probability measure on $\cU$ that is ergodic and invariant with respect to $R$. Then there exists a measurable set $E \subset \cU$ such that $\eta( E) = 1$ and if $x$ is in $E$, then for each bounded continuous function $f : \cU \to \R$,
\begin{equation*}
\lim_n \frac{1}{n} \sum_{k=0}^{n-1} f \circ R^k (x) = \int f \, d\eta.
\end{equation*}
\end{lemma}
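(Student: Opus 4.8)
The plan is to reduce the statement to the compact case by exhausting $\cU$ with compact sets and then invoking Birkhoff's pointwise ergodic theorem along a carefully chosen countable family of bounded continuous functions. First I would use the inner regularity of Borel probability measures on a Polish space to pick an increasing sequence of compact sets $K_1 \subseteq K_2 \subseteq \cdots \subseteq \cU$ with $\mu(K_m) \uparrow 1$. For each $m$ the space $C(K_m)$ of continuous real functions on $K_m$, with the supremum norm, is separable; so I would fix a countable dense subset $\{g_{m,j}\}_{j \geq 1} \subseteq C(K_m)$ and, using the Tietze extension theorem (valid since metric spaces are normal), extend each $g_{m,j}$ to a bounded continuous $\tilde{g}_{m,j} : \cU \to \R$ with $\|\tilde{g}_{m,j}\|_\infty = \|g_{m,j}\|_{C(K_m)}$. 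The resulting family $\mathcal{G} = \{\tilde{g}_{m,j} : m,j \geq 1\} \cup \{\mathbb{I}_{K_m} : m \geq 1\}$ is countable and contained in $L^1(\mu)$.

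Next I would apply Birkhoff's pointwise ergodic theorem — which requires no continuity or topology on $R$ — to every element of $\mathcal{G}$, and intersect the resulting countably many full-measure sets to obtain a set $E$ with $\mu(E) = 1$ on which $n^{-1} \sum_{k=0}^{n-1} h(R^k x) \to \int h \, d\mu$ for every $h \in \mathcal{G}$. Ergodicity is precisely what makes each of these limits the constant $\int h \, d\mu$. In particular, for $x \in E$ the visit frequency $n^{-1} \sum_{k=0}^{n-1} \mathbb{I}_{K_m}(R^k x)$ converges to $\mu(K_m)$, so the frequency with which the orbit of $x$ lies outside $K_m$ converges to $\mu(\cU \setminus K_m)$.

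The remaining step is to upgrade the convergence from members of $\mathcal{G}$ to an arbitrary bounded continuous $f : \cU \to \R$. Fix such an $f$ with $\|f\|_\infty \leq M$, and fix $\varepsilon > 0$; choose $m$ with $\mu(\cU \setminus K_m) < \varepsilon$ and then $j$ with $\sup_{K_m} |f - g_{m,j}| < \varepsilon$, so that $|f - \tilde{g}_{m,j}| < \varepsilon$ on $K_m$ and $|f - \tilde{g}_{m,j}| \leq 2M+1$ on all of $\cU$. Splitting the Birkhoff sum of $f - \tilde{g}_{m,j}$ according to whether $R^k x \in K_m$ and using the frequency statement above, one gets $\limsup_n |n^{-1}\sum_{k=0}^{n-1}(f - \tilde{g}_{m,j})(R^k x)| \leq \varepsilon + (2M+1)\varepsilon$ for $x \in E$, and likewise $|\int f \, d\mu - \int \tilde{g}_{m,j}\, d\mu| \leq \varepsilon + (2M+1)\varepsilon$; since $n^{-1}\sum_{k=0}^{n-1} \tilde{g}_{m,j}(R^k x) \to \int \tilde{g}_{m,j}\, d\mu$, the triangle inequality yields $\limsup_n |n^{-1}\sum_{k=0}^{n-1} f(R^k x) - \int f\, d\mu| \leq c(M)\,\varepsilon$ with $c(M)$ depending only on $M$, and letting $\varepsilon \downarrow 0$ finishes the argument.

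The only genuine obstacle is that $C_b(\cU)$ is not separable when $\cU$ is noncompact, so one cannot simply run Birkhoff's theorem over a countable dense subset of $C_b(\cU)$; the compact-exhaustion-plus-Tietze device is exactly what circumvents this. Everything else is routine $\varepsilon$-bookkeeping, the one point requiring care being the control of orbit points that escape the chosen compact set $K_m$, which is supplied by the ergodic theorem applied to $\mathbb{I}_{K_m}$.
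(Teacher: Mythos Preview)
Your proposal is correct and follows essentially the same approach the paper sketches: the paper states that one reduces to the compact case via the regularity of $\mu$, the separability of $C(K)$ for compact $K$, and the pointwise ergodic theorem, and your argument carries out precisely this reduction, with the Tietze extension and the $\varepsilon$-splitting over $K_m$ and its complement being the natural details needed to make the sketch rigorous.
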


\section{The \Tracking} \label{Sect:TrackingLemma}

This section is devoted to the proof of Theorem \ref{Thm:GeneralConvergence}.  We first establish the finiteness of the optimal loss $L(S : \nu)$. Recall that under the standard assumptions $\ell^* \in L^1(\nu)$ is a measurable upper bound on $\sup_x |\ell(x,y)|$.

\begin{lemma} 
\label{Lemma:CisFinite}
Under the standard assumptions, $L(S: \nu) \in (-\infty, \infty)$.
\end{lemma}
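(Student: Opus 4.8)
The plan is to show both bounds $C(S:\nu) < \infty$ and $C(S:\nu) > -\infty$ separately, using the domination hypothesis $\sup_{x \in \cX} |c(x,y)| \le c^*(y)$ with $c^* \in L^1(\nu)$.

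For the upper bound, recall that $\cJ(S:\nu)$ is nonempty: by the Krylov--Bogoliubov theorem $\cM(\cX,S)$ contains some measure $\mu$, and then the product measure $\mu \otimes \nu$ lies in $\cJ(\mu,\nu) \subseteq \cJ(S:\nu)$. Since $C(S:\nu)$ is an infimum over a nonempty set, it suffices to show that $\int c\, d\lambda$ is finite for any fixed $\lambda \in \cJ(S:\nu)$; I will use $\lambda = \mu \otimes \nu$. Because $|c(x,y)| \le c^*(y)$ pointwise, we have $\int |c|\, d\lambda \le \int c^*(y)\, d\lambda(x,y) = \int c^*\, d\nu < \infty$, where the equality uses that the $\cY$-marginal of $\lambda$ is $\nu$. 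Hence $\int c\, d\lambda$ is a well-defined finite real number, so $C(S:\nu) \le \int c\, d(\mu\otimes\nu) < \infty$.

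For the lower bound, I want $C(S:\nu) > -\infty$, i.e., $\inf_{\lambda \in \cJ(S:\nu)} \int c\, d\lambda > -\infty$. This follows from the same domination: for \emph{every} $\lambda \in \cJ(S:\nu)$, its $\cY$-marginal is $\nu$, so
\[
\int c\, d\lambda \ \ge \ -\int |c|\, d\lambda \ \ge \ -\int c^*(y)\, d\lambda(x,y) \ = \ -\int c^*\, d\nu \ > \ -\infty,
\]
a bound uniform in $\lambda$. Taking the infimum over $\lambda \in \cJ(S:\nu)$ gives $C(S:\nu) \ge -\int c^*\, d\nu > -\infty$.

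There is no real obstacle here; the only point requiring a word of care is the measurability and integrability of $c$ against an arbitrary joining $\lambda$ — $c$ is merely lower semicontinuous, hence Borel measurable, and the domination by $c^* \circ \proj_\cY \in L^1(\lambda)$ (valid since $\proj_\cY{}_*\lambda = \nu$) makes $\int c\, d\lambda$ meaningful and finite. Combining the two displayed bounds yields $C(S:\nu) \in [-\int c^*\, d\nu,\ \int c\, d(\mu\otimes\nu)] \subseteq (-\infty,\infty)$, as claimed.
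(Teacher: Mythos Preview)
Your proof is correct and follows essentially the same approach as the paper: establish that $\cJ(S:\nu)$ is nonempty via Krylov--Bogoliubov and the product measure $\mu\otimes\nu$, then use the domination $|c(x,y)|\le c^*(y)$ together with the fact that every $\lambda\in\cJ(S:\nu)$ has $\cY$-marginal $\nu$ to get the uniform bound $\bigl|\int c\,d\lambda\bigr|\le \int c^*\,d\nu<\infty$. The paper compresses your upper and lower bounds into this single absolute-value inequality, but the content is identical.
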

\begin{proof}
As $\cX$ is non-empty and compact and $S$ is continuous, there exists at least one measure $\mu \in \cM(\cX,S)$.  
Thus $\mu \otimes \nu$ is in $\cJ(S: \nu)$, and in particular, $\cJ(S: \nu)$ is non-empty. 
By assumption, $| \int \ell \, d\lambda | \leq \int \ell^* \, d\nu < \infty$, 
for each $\lambda \in \cJ(S: \nu)$, and as this bound is independent of $\lambda$, the lemma follows.
\end{proof}

The proof of Theorem \ref{Thm:GeneralConvergence} relies on Kingman's subadditive ergodic theorem \cite{Kingman1968,Kingman1973,Kingman1976} and a weak compactness argument.  
We first establish the result when $T$ is  continuous and then deduce the general result from this special case. 

\vskip.1in

\begin{PfofTracking}

We begin by establishing that, for $\nu$-almost every $y$, 
\begin{equation} \label{Eqn:Plato}
\lim_n  \inf_{x \in \cX} \frac{1}{n} \sum_{k=0}^{n-1} \ell(S^k x, \, T^ky) = \sup_n \, \frac{1}{n} \int \biggl(  \inf_{x \in \cX}   \sum_{k=0}^{n-1} \ell\bigl(S^kx, T^ky \bigr) \biggr) \, d\nu(y).
\end{equation} 
For each $n \in \N$ and $y \in \cY$, define
\begin{equation*}
G_n(y) = \inf_{x \in \cX} \sum_{i=0}^{n-1} \ell(S^i x , \, T^i y).
\end{equation*}
Note that the sequence $(G_n)_{n \geq 1}$ is super-additive in the sense that
\begin{align*}
G_{m+n}(y) 
& \, \geq \, \inf_{x \in \cX} \sum_{i=0}^{m-1} \ell(S^i x, \, T^i y) + \inf_{x \in \cX} \sum_{i=m}^{m+n-1} \ell(S^i x , \, T^i y)  \\[.1in]
& \, \geq \, G_{m}(y) + G_{n}(T^m y).
\end{align*}
By Kingman's subadditive ergodic theorem applied to $(-G_n)_{n \geq 1}$,
there exists $\gamma \in (-\infty,\infty]$ such that for $\nu$-almost every $y$,
\begin{equation} 
\label{Eqn:Summertime}
\lim_n \frac{G_n(y)}{n} \ = \ \gamma \ = \ \sup_n \frac{1}{n} \int G_n \, d\nu.
\end{equation}
This equation establishes the existence of the limit in (\ref{Tracking1}) and the equality in (\ref{Eqn:Plato}).

\vskip.1in

We now establish that $L(S: \nu) = \gamma$. 
Let $\lambda$ be any element of $\cJ(S: \nu)$.  As $\lambda$ is invariant under $S \times T$, for each $n \geq 1$,
\begin{align} \label{Eqn:Solstice}
\int  \ell \, d\lambda 
\ = \
\frac{1}{n} \int \sum_{i=0}^{n-1} \ell(S^i x,T^i y) \, d\lambda
\ \geq \
\frac{1}{n} \int G_n \, d\nu.
\end{align}
It then follows from (\ref{Eqn:Summertime}) that
$\int \ell \, d\lambda \geq \gamma$.
As $\lambda \in \cJ(S: \nu)$ was arbitrary, we conclude that $\gamma \leq \inf_{\lambda} \int \ell \, d\lambda = L(S: \nu)$, 
where the infimum is taken over $\lambda$ in $\cJ(S: \nu)$.

\vskip.1in


To complete the proof, we establish the existence of a joining $\lambda \in \cJ(S: \nu)$ such that 
$\int \ell \, d\lambda \leq \gamma$. To do this, we construct a suitable sequence of empirical 
measures on $\cX \times \cY$ and then use a weak compactness argument to identify a limit
$\lambda$ with the desired properties.  Assume for the moment that $T$ is continuous. 

For each $m \geq 1$, let $K_{m} \subset \cY$ be a compact set such that 
$\nu(K_{m}) > 1-\frac{1}{m}$. 
Using the arguments above, Lemma \ref{Lemma:GenericPoints}, and the ergodic theorem, one may identify
a measurable set $E \subseteq \cY$ such that $\nu(E) = 1$ and for every $y \in E$, Equation
(\ref{Eqn:Summertime}) and each of the following relations holds as $n$ tends to infinity: 
\begin{align}
& \nu_n : = \frac{1}{n} \sum_{k=0}^{n-1} \delta_{T^k y} \mbox{ converges weakly to } \nu ; \label{nun_def} \\[.05in]
& \nu_n(K_{m}) \to \nu(K_{m}) 
\ \mbox{ for each $m \geq 1$} ; \\[.1in]
& \int_{\ell^* > m} \ell^* \, d\nu_n \to \int_{\ell^* > m} \ell^* \, d\nu
\ \mbox{ for each $m \geq 1$.} 
\end{align}
Elements of the set $E$ will be referred to as $\nu$-generic points.

Let $y$ be a $\nu$-generic point in $\cY$.  By (\ref{Eqn:Summertime}), there exists a sequence $(x_n)_{n \geq 1}$ in $\cX$ such that
$n^{-1} \sum_{k=0}^{n-1} c(S^k x_n, \, T^k y) \to \gamma$. 
For each $n \geq 1$, define the discrete measure
\begin{equation*}
\lambda_n = \frac{1}{n} \sum_{k=0}^{n-1} \delta_{(S^k x_n, \, T^k y)}
\end{equation*}
on $\cX \times \cY$.
Note that $\lim_n \int \ell \, d\lambda_n = \gamma$.
We claim that the family $\{\lambda_n : n \in \N\}$ is tight.  
To this end, let $\delta > 0$ be given and  choose $N > 1/\delta$. 
By definition, $K_N$ is compact and $\nu(K_N) > 1-\delta$. 
As $y \in E$, for all $n$ sufficiently large,
$\lambda_n(\cX \times K_N) = \nu_n(K_N) > 1 - \delta$. 
As $\delta > 0$ was arbitrary and $\cX$ is compact, the claim follows.


Let $\lambda$ be a weak limit of the family $\{ \lambda_n : n \in \N \}$.  
We claim that $\lambda$ is in $\cJ(S: \nu)$ and that $\lambda$ achieves the infimum in the definition of $L(S : \nu)$ (see (\ref{Tracking1})).  
By passing to a subsequence if necessary, assume that $\lambda_n \Rightarrow \lambda$.  
Let $f$ be in $C_b(\cX \times \cY)$.  
Under the assumption that $S$ and $T$ are continuous, the composition $
f \circ (S \times T)$ is in $C_b(\cX \times \cY)$, and weak convergence then implies that 
\begin{align*}
 \int f \circ (S \times T) \, d\lambda & = \lim_n \int f \circ (S \times T) \, d \lambda_n 
 = \lim_n \frac{1}{n} \sum_{i=1}^{n} f ( S^{i} x_n, T^{i} y) \\
 & = \lim_n \frac{1}{n} \sum_{i=0}^{n-1} f ( S^i x_n, T^i y) 
  = \lim_n  \int f  \, d \lambda_n 
 = \int f \, d\lambda.
\end{align*}
As $f \in C_b(\cX \times \cY)$ was arbitrary, it follows that $\lambda$ is invariant under $S \times T$.  In particular,
$\lambda \in \cM( \cX \times \cY, S \times T)$ and therefore $\pf{\lambda}{\cX}$ is in $\cM(\cX,S)$. 
Furthermore, as $y \in E$, $\pf{\lambda_n}{\cY} = \nu_n$ converges weakly to $\nu$, and therefore
$\pf{\lambda}{\cY} = \nu$.  
Thus, $\lambda$ is in  $\cJ(S: \nu)$.

\vspace{2mm}

In light of the fact that $\lim_n \int \ell \, d\lambda_n = \gamma$, it suffices to show that
\begin{equation} \label{Eqn:Charlotte}
\lim_n \int \ell \, d\lambda_n \geq \int \ell \, d\lambda.
\end{equation}
If the loss $\ell$ were bounded from below, this inequality would follow from the Portmanteau Theorem 
for weak convergence, since we have assumed that it is lower semicontinuous.
For unbounded losses, we appeal to a truncation argument.  
Though the details are somewhat routine, we include them here for completeness.
For $m \in \N$, define the truncated loss
\begin{equation*}
 \ell_{m}(x,y) = \left \{ \begin{array}{ll}
                      \ell(x,y), & \text{ if } |\ell(x,y)| \leq m \\
                       m,  & \text{ if } \ell(x,y) \geq m \\
                       -m, & \text{ if } \ell(x,y) \leq -m.
                     \end{array}
            \right. 
\end{equation*}
Note that $|\ell_{m}| \leq |\ell|$ and that $\ell_{m} \to \ell$ as $m$ tends to infinity.  
The integrability of $\ell$ with respect to $\lambda$ follows from that of $\ell^*$ with respect to $\nu$, and 
the dominated convergence theorem then ensures that $\int \ell_{m} \, d\lambda \to \int \ell \, d\lambda$.  
Moreover, with $\nu_n$ defined as in (\ref{nun_def}), it follows from the choice of $y$ that
\begin{align*} 
\limsup_n
\int |\ell - \ell_{m}| \, d\lambda_n 
\ \leq \
\limsup_n
\int_{\ell^* > m} \ell^* \, d\nu_n 
\ = \
\int_{\ell^* > m} \ell^* \, d\nu .
\end{align*}

In order to establish (\ref{Eqn:Charlotte}), let $\epsilon>0$ be fixed.  By virtue of the results in the previous
paragraph, there exist integers $m$ and $n_1$ sufficiently large that for each $n \geq n_1$
\[
\biggl| \int \ell_{m} \, d\lambda - \int \ell \, d\lambda \, \biggr| < \epsilon/3 
\ \ \mbox{ and } \ 
\int |\ell - \ell_{m}| \, d\lambda_n  < \epsilon/3. 
\]
Moreover, as $\lambda_n \Rightarrow \lambda$ and $\ell_{m}$ is lower semi-continuous 
and bounded, 
there exists $n_2 \geq n_1$ such that for each $n \geq n_2$,
\begin{equation*} 
\int \ell_{m} \, d\lambda - \int \ell_{m} \, d\lambda_n  < \epsilon/3.
\end{equation*}
Combining the inequalities above, a straightforward bound shows that
\begin{equation*}
 \int \ell \, d\lambda - \int \ell \, d\lambda_n   < \epsilon
\end{equation*}
for $n > n_2$.  As $\epsilon>0$ was arbitrary, the inequality (\ref{Eqn:Charlotte}) 
is established, and we conclude that $\gamma \geq L(S: \nu)$.

Suppose now that the transformation $T$ is Borel measurable but not continuous. 
Let $\tilde{\nu}$ be the process measure on $\cY^{\N}$ generated by $(\cY,T,\nu)$ (see Section \ref{Sect:Process}), and let $\tilde{\ell} : \cX \times \cY^{\N} \to \R$ be defined by 
$\tilde{\ell}(x, \mathbf{y}) = \ell(x,y_0)$.  
Note that $\tilde{\ell}$ is lower semicontinuous and that $\sup_{x} |\tilde{\ell}(x, \mathbf{y})|$ is bounded above by a $\tilde{\nu}$-integrable function.
As the left-shift $\tau : \cY^{\N} \to \cY^{\N}$ is continuous, we may apply the arguments above to the systems $(\cX,S)$ and $(\cY^{\N},\tau,\tilde{\nu})$ with loss $\tilde{\ell}$.  
Equation (\ref{Eqn:Summertime}) and inequality (\ref{Eqn:Solstice}) are the same for the original and shift systems.
As for the inequality $\gamma \geq L(S: \nu)$, the arguments above show that there is a joining $\tilde{\lambda} \in \cJ(S: \tilde{\nu})$ such that 
$\gamma = \int \tilde{\ell} \, d\tilde{\lambda} = \int \ell \, d\pf{\tilde{\lambda}}{\cX \times \cY}$. 
By Lemma \ref{Lemma:GoHeels}, $\lambda = \pf{\tilde{\lambda}}{\cX \times \cY}$ is in $\cJ(S: \nu)$.  
This establishes (\ref{Tracking1}) and the existence of a joining $\lambda$ in $\cJ(S: \nu)$ that achieves the infimum in the definition of $L(S: \nu)$. 
\end{PfofTracking}

\section{General results for inference} \label{Sect:GeneralResults}

The present section is devoted to the proof of Theorem \ref{Thm:Inference}.
We begin with several preliminary lemmas, the first of which establishes the existence of optimal tracking schemes.
To ease the notation in the proof of Theorem \ref{Thm:Inference} below, we replace the functions $x_n$ in Definition \ref{Defn:PhiEstimation} with functions $f_n$, as in the following lemma.
 
\begin{lemma}
\label{Lemma:MeasurabilityOptimizers}
If the systems $(\cX,S)$ and $(\cY, T, \nu)$ and the loss 
$\ell$ satisfy the standard assumptions of Section \ref{Sect:Tracking_Intro}, then
there exists a measurable sequence of functions $f_n : \cY^n \to \cX$ satisfying (\ref{Eqn:MinimizingScheme}).
\end{lemma}

\begin{proof}
For each $n \geq 1$, define $\ell_n : \cX \times \cY^n \to \R$ by
$\ell_n(x, y_0^{n-1}) = \sum_{k=0}^{n-1} \ell(S^k x, y_k)$.
Then it is easy to see that $\ell_n = s_n \circ \psi_n$, where 
\[
\psi_n(x, y_0^{n-1}) = ((x, \ldots, S^{n-1}x), y_0^{n-1})
\]
and $s_n(x_0^{n-1}, y_0^{n-1}) = \sum_{k=0}^{n-1} \ell(x_k, y_k)$.
Our assumptions on $S$ and $\ell$ ensure that $\psi_n$ is continuous and that 
$s_n$ is lower semicontinuous, and therefore $\ell_n$ is lower semicontinuous.  
It follows from \cite[Proposition 7.33, p. 153]{Bertsekas1996}
that there exists a Borel measurable function $f_n : \cY^n \to \cX$ such that
\begin{equation*}
\ell_n( f_n(y_0^{n-1}), y_0^{n-1}) = \inf_{ x \in \cX} \ell_n( x, y_0^{n-1}).
\end{equation*}
The definition of $\ell_n$ ensures that $(f_n)_{n \geq 1}$ satisfies the conclusions of the lemma.
\end{proof}

\vskip.1in

\begin{lemma} 
\label{Lemma:ThetaVariational}
Under the standard assumptions,
\begin{equation} 
\label{Eqn:Winter}
  L(S: \nu) = \inf_{\theta \in \Theta} L(S_{\theta}: \nu)
\end{equation}
where $S_{\theta}$ is the restriction of $S$ to $\cX_{\theta} = \varphi^{-1}\{\theta\}$. Furthermore, the infimum is attained.
\end{lemma}

\begin{proof}
Since $(\cX_{\theta}, S_{\theta})$ is a subsystem of $(\cX,S)$, it is immediate that
$L(S: \nu) \leq L(S_{\theta}: \nu)$ for each $\theta \in \Theta$. Thus, it suffices to 
show that $L(S_{\theta}: \nu) \leq L(S: \nu)$ for some $\theta \in \Theta$. 
By Remark \ref{Rmk:ErgJoining}, there exists an 
ergodic joining $\lambda$ in $\cJ_{min}(S: \nu)$.  Define $h: \cX \times \cY \to \Theta$ by
$h = \varphi \circ \proj_{\cX}$ and let $\eta = \lambda \circ h^{-1}$ be the push-forward measure
of $\lambda$ on $\Theta$.  
The assumption that $\varphi \circ S = \varphi$ ensures that $\eta$ is ergodic 
with respect to the identity transformation on $\Theta$, and therefore  
$\eta$ is necessarily a point mass concentrated at some parameter $\theta \in \Theta$.
In particular, $\lambda( \cX_{\theta} \times \cY) = 1$, so that
$\lambda \in \cJ(S_{\theta} : \nu)$.  
Thus $L(S_{\theta} : \nu) \leq \int \ell \, d\lambda = L(S: \nu)$, and the result follows.
\end{proof}

\begin{lemma} \label{Lemma:ThetaMin}
The set $\Theta_{min} \subseteq \Theta$ 
is non-empty and compact.
\end{lemma}

\begin{proof}
As the infimum in (\ref{Eqn:Winter}) is achieved, $\Theta_{min}$ is non-empty.
Since $\Theta$ is compact by assumption, it suffices to show that $\Theta_{min}$ is closed. 
Let $(\theta_n)_{n \geq 1}$ be a sequence in $\Theta_{min}$ that converges to a parameter $\theta \in \Theta$.  
It follows from (\ref{Eqn:EquivDefThetaMin}) that
for each $n$, there is a joining $\lambda_n \in \cJ_{min}(S: \nu)$ such that $\lambda_n( \varphi^{-1} \{\theta_n\} \times \cY) = 1$.  
As $\cJ_{min}(S: \nu)$ is compact, 
the sequence $(\lambda_n)_{n \geq 1}$ has a convergent subsequence.   Passing to a subsequence 
if necessary, suppose that $\lambda_n$ converges to $\lambda \in \cJ_{min}(S: \nu)$.

Define $h : \cX \times \cY \to \Theta$ by $h = \varphi \circ \proj_{\cX}$, and consider 
the associated push-forward measures $\eta_n = \lambda_n \circ h^{-1}$ and 
$\eta = \lambda \circ h^{-1}$ on $\Theta$.  
Note that $\eta_n \Rightarrow \eta$, as $\lambda_n \Rightarrow \lambda$ and $h$ is continuous.  
Our choice of $(\theta_n)$ ensures that 
$\eta_n = \delta_{\theta_n} \Rightarrow \delta_{\theta}$, and as weak limits are
unique, $\eta =  \delta_{\theta}$.   Thus $\lambda(\varphi^{-1}\{\theta\} \times \cY) = 1$, and as
$\lambda \in \cJ_{min}(S: \nu)$ we conclude that $\theta$ is an element of $\Theta_{min}$.
\end{proof}

In the following proof, we make use of the ergodic decomposition of an invariant measure and a related lemma, details of which may be found in Appendix \ref{Sect:SetOfOptimalJoinings}. Note that for the sake of notation in the proof, we denote the sequence of estimators by $f_n : \mathcal{Y}^n \to \mathcal{X}$ instead of by $x_n : \mathcal{Y}^n \to \mathcal{X}$.
\vskip.1in

\begin{PfofInference}

Let $(\theta_n)_{n \geq 1}$ be a sequence of estimators of the form $\theta_n = \varphi \circ f_n$,
where $\{f_n\}$ is a sequence of measurable functions satisfying (\ref{Eqn:MinimizingScheme}), which exists by Lemma \ref{Lemma:MeasurabilityOptimizers}.
As in the proof of Theorem \ref{Thm:GeneralConvergence}, fix a set $E \subset \cY$ 
of $\nu$-generic points having full measure.
Let $y \in E$, and then let $\hat{\theta}_n = \theta_n(y,\dots,T^{n-1}y)$.
 For $n \geq 1$, define 
the state $x_n = f_n(y,\ldots,T^{n-1}y)$ and the associated empirical measure 
$\lambda_n = n^{-1} \sum_{k=0}^{n-1} \delta_{(S^k x_n, T^k y)}$ on $\cX \times \cY$.  
By arguments identical to those in the proof of Theorem \ref{Thm:GeneralConvergence}, one may show that
$(\lambda_n)_n$ is tight and that all of its weak limit points are in $\cJ_{min}(S: \nu)$.

Let $\mathcal{O} \subseteq \Theta$ be an open neighborhood of $\Theta_{min}$. 
Define the function $\psi : \cM( \cX \times \cY ) \to [0,1]$ by 
$\psi(\lambda) = \lambda( \varphi^{-1}(\mathcal{O}) \times \cY)$, and let $V = \psi^{-1} (\frac{1}{2},1]$.
As $\varphi^{-1}(\mathcal{O}) \times \cY$ is open, $\psi$ is lower semi-continuous, and therefore $V$ is open in $\cM( \cX \times \cY)$.
We claim that $\cJ_{min}(S: \nu) \subset V$.  
To see this, first consider an ergodic optimal joining $\eta \in \cJ_{min}(S: \nu)$. 
Define the factor map $h : \cX \times \cY \to \Theta$ by $h = \varphi \circ \proj_{\cX}$.
Since $\eta$ is ergodic, the push-forward measure $\eta \circ h^{-1}$ must be ergodic with respect to the identity map on $\Theta$, as in the proof of Lemma \ref{Lemma:ThetaVariational}, and hence $\eta \circ h^{-1} = \delta_{\theta}$ for some $\theta \in \Theta$.
Thus, for every ergodic $\eta$ there is a parameter $\theta \in \Theta_{min}$ such that 
$\eta(\varphi^{-1} \{\theta\} \times \cY) = 1$, and therefore $\eta(\varphi^{-1} (\Theta_{min}) \times \cY) = 1$.
Now consider an arbitrary joining $\lambda \in \cJ_{min}(S: \nu)$ with ergodic decomposition $\lambda = \int \eta \, d\xi$. 
By Lemma \ref{Lemma:JminErgDecomp}, $\xi$-almost every measure $\eta$ is an ergodic element of $\cJ_{min}(S: \nu)$.
Thus, the above argument gives that for $\xi$-almost every $\eta$, we have $\eta(\varphi^{-1} (\Theta_{min}) \times \cY) = 1$.
It follows that $\psi(\lambda) =1$, and we conclude that $\lambda$ is in $V$.


Since all the limit points of the family $\{\lambda_n\}_{n \geq 1}$ are in the open set $V$, 
there exists an integer $n_1$ such that $\lambda_n \in V$ for all $n \geq n_1$.  
By construction, $\lambda_n( \varphi^{-1} (\mathcal{O}) \times \cY) = \delta_{\hat{\theta}_n}( \mathcal{O} )$.
Thus for each $n \geq n_1$, $\delta_{\hat{\theta}_n}(\mathcal{O}) = \psi( \lambda_n ) > 1/2$, 
which implies that $\hat{\theta}_n$ is in $\mathcal{O}$ as desired.

Let us now show that any parameter in $\Theta_{min}$ is the limit of an optimal $\varphi$-estimation scheme. 
Let $\theta_0$ be any element of $\Theta_{min}$. 
We will show that there exists a $\varphi$-optimal estimation scheme that converges almost surely to $\theta_0$.
By Lemma \ref{Lemma:ThetaVariational}, $L(S : \nu) = L(S_{\theta_0} : \nu)$, and it follows from Remark \ref{Rmk:ErgJoining} that there exists an ergodic joining $\lambda \in \cJ_{min}(S_{\theta_0} : \nu)$.  By Birkhoff's ergodic theorem, there exists a set $E \subset \cX_{\theta} \times \cY$ of $\lambda$ measure
one such that if $(x,y) \in E$ then
\begin{equation} \label{Eqn:Leibniz}
\lim_n \frac{1}{n} \sum_{k=0}^{n-1} \ell(S^kx,T^ky) = \int \ell \, d\lambda,
\end{equation}
which is equal to $L(S: \nu)$ by construction. 
By the regularity of $\lambda$, $E$ contains a $\sigma$-compact set $F$ of the same measure.  
The measurable selection theorem of Brown and Purves \cite[Theorem 1]{Brown1973} then implies 
that there is a measurable function $f : \cY \to \cX_{\theta_0}$ such that $(f(y),y) \in F$ for 
$\nu$-almost every $y \in \cY$. Let $f_n : \cY^n \to \cX$ be given by $f_n(y_0^{n-1}) = f(y_0)$. 
By (\ref{Eqn:Leibniz}), the sequence $\{f_n\}_{n \geq 1}$ satisfies (\ref{Eqn:MinimizingScheme}), 
and therefore the constant estimator $\theta_0 = \varphi \circ f_n$ is an optimal 
$\varphi$-estimation scheme. In particular, there exists an optimal  $\varphi$-estimation scheme that 
converges to $\theta_0$, as desired.
\end{PfofInference}

\section{Quantized observations} \label{Sect:DiscreteObservations}

This section is devoted to the proofs of our results concerning the estimation of transformations from quantized observations.  
We refer to the objects defined in Section \ref{Sect:QuantizedIntro} throughout this section.

Recall that the state space $\cX$ of the reference system is defined to be the closure of the set $\{(\theta, \lab(\theta,u)) : \theta \in \Theta, \, u \in \mathcal{U} \}$ inside of $\Theta \times \{0,1\}^{\N}$. Hence $\cX$ is compact and metrizable, and it is easy to see that $\cX$ is invariant under the 
map $\Id \times \tau$, where $\tau$ is the left-shift map on $\{0,1\}^{\N}$. 
Further recall that the family of transformations in $\cR$ and the topology on $\Theta$ enter indirectly through the definition of $\cX$, which is used to define the target set $\Theta_1$.

First we establish some additional notation and another interpretation of $h(\cR)$, as the topological entropy of a dynamical system.
Let $\cL$ be defined by
\begin{equation*}
\cL \,  = \mbox{closure of } \bigl\{ (\pi(R_{\theta}^k u))_{k \geq 0}  : \theta \in \Theta, \, u \in \cU \bigr\} \mbox{ in $\{0,1\}^\N$} .
\end{equation*} 
Since $\cL$ is a closed subset of $\{0,1\}^\N$, it is compact, and it is easy to see that $\cL$ is invariant 
under the left shift $\tau$.  Thus the pair $(\cL, \tau)$ is a topological dynamical system.  
Now we may observe that the quantity $h(\cR)$ defined in Section \ref{Sect:QuantizedIntro} is actually the topological entropy of the system $(\cL,\tau)$. See  \cite{Walters2000} for an introduction to topological entropy for dynamical systems.

In what follows, we find it useful to state our results in terms of the $\dbar$-distance introduced by Ornstein \cite{Ornstein1970,Ornstein1973,Ornstein1974} 
in the context of the isomorphism theory for Bernoulli processes.
For a fixed shift-invariant measure $\nu$ on $\{0,1\}^{\N}$ and each $\theta \in \Theta$, define
\begin{equation*}
\dbar(\theta : \nu) = \inf_{\mu} \dbar(\mu, \nu),
\end{equation*}
where the infimum is taken over all $\tau$-invariant probability measures $\mu$ such that $\mu(\cX_{\theta}) = 1$.
An application of Theorem \ref{Thm:Inference} yields the following result.

\begin{thm} 
\label{Thm:DiscreteGeneral}
Let $(B_i)_{i \geq 0}$ be a stationary ergodic $\{0,1\}$-valued process having distribution 
$\nu$ on $\{0,1\}^{\N}$.  
If the sequence of estimators $(\theta_n)_n$ satisfies (\ref{Eqn:DiscreteEstimator}), 
then $\hat{\theta}_n(B_0,\dots,B_{n-1})$ converges almost surely to the set
\begin{equation*}
\Theta_0 = \argmin_{\theta \in \Theta} \dbar(\theta : \nu). 
\end{equation*}
\end{thm}

\begin{proof}

By definition, $\Theta_{min} = \argmin_{\theta} L(S_{\theta}: \nu)$.  Using the correspondences in 
Table \ref{Table:DiscreteChoices}, it is easy to see that $S_\theta$ is the restriction of $\Id \times \tau$
to the set $\{\theta\} \times \cX_{\theta}$, and therefore
\[
L(S_{\theta}: \nu) \, = \, \inf_{\lambda \in J(S_{\theta}: \nu)} \int \ell \, d\lambda 
\, = \, \inf_{\mu \in \cM(\cX_{\theta}, \tau)} \dbar(\mu, \nu)
\, = \, \dbar(\theta: \nu). 
\]
Thus $\Theta_{min} = \argmin_{\theta} \dbar(\theta: \nu) = \Theta_0$, as desired.
\end{proof}

By associating parameters $\theta$ with the set of $\tau$-invariant measures 
on $\cX_{\theta}$, we may view the set $\Theta_0$ as the projection of the 
observation process onto the parameter set $\Theta$ under the $\dbar$-metric.
The following lemma, which characterizes when $\dbar(\theta: \eta)$ equals zero, will be used to prove our consistency results.

\begin{lemma} 
\label{Lemma:Dbar}
If $\nu$ is a shift-invariant Borel probability measure on $\{0,1\}^{\N}$, 
then $\dbar(\theta : \nu) = 0$ if and only if $\nu(\cX_{\theta}) = 1$.
\end{lemma}

\begin{proof}
If $\nu(\cX_{\theta}) = 1$, then clearly $0 \leq \dbar(\theta: \nu) \leq \dbar(\nu, \nu) = 0$.
Now suppose that $\dbar(\theta: \eta) = 0$. By Theorem \ref{Thm:GeneralConvergence}, the infimum defining 
$\dbar(\theta: \eta) = L(S_{\theta}: \eta)$ is achieved, and
it follows that there is a measure $\mu \in \cM(\cX_{\theta}, \tau)$ such
that $\dbar(\mu, \nu) = 0$.  As $\dbar$ is a metric, $\mu = \nu$ and therefore 
$\nu(\cX_{\theta}) = 1$.
\end{proof}

\vskip.1in

\vskip.1in

\begin{PfofQuantConsist_I}

By Theorem \ref{Thm:DiscreteGeneral}, the estimates $\hat{\theta}_n(Y_0,\dots,Y_{n-1})$ converge almost surely to the set
$\Theta_0 = \argmin_{\theta \in \Theta} \dbar( \theta : \nu_0)$. Thus it suffices to show that 
$\Theta_0 = \Theta_1$. 
As the measure $\nu_0$ of the observation process is supported on $\cX_{\theta_0}$, we have $\dbar(\theta_0 : \nu_0) = 0$.
Then by Lemma \ref{Lemma:Dbar}, we see that 
$\Theta_0 = \{\theta : \dbar(\theta: \nu_0) = 0\}$ is identical to $\Theta_1 = \{ \theta : \nu_0(\cX_{\theta}) = 1 \}$. 
The converse part of Theorem \ref{Thm:QuantConsist} is a direct consequence of the converse 
part of Theorem \ref{Thm:GeneralConvergence}.
\end{PfofQuantConsist_I}

\vspace{2mm}

Before turning to the proof of Part (2) of Theorem \ref{Thm:QuantConsist}, we require some additional definitions.


\begin{rmk} \label{Rmk:RelIndJoining}
In the proof of Theorem \ref{Thm:QuantConsist} below, 
we make use of a standard construction, called the relatively independent joining, 
to ``glue together'' two joinings along a common factor (see \cite[Section 3.1]{Rue2006}). 
In more detail, suppose we have two measure-preserving system $(\cU_i, R_i, \eta_i)$ for $i = 1, 2$.
Further suppose that these systems have a common factor $(\cU, R, \eta)$, meaning that there exist measurable maps 
$\psi_i : \cU_i \to \cU$ such that $\eta = \eta_i \circ \psi_i^{-1}$ and $\psi_i \circ R_i = R \circ \psi_i$  for $i = 1,2$.  
Now let $\eta_i = \int \eta_{i,u} \, d\eta(u)$ be the disintegration of $\eta_i$ over $\eta$, and define the measure $\lambda$ on $\cU_1 \times \cU_2$ by
\begin{equation*}
\lambda = \int \eta_{1,u} \otimes \eta_{2,u} \, d\eta(u).
\end{equation*}
Then it is not difficult to check that $\lambda$ is a joining of $(\cU_1, R_1, \eta_1)$ and $(\cU_2, R_2, \eta_2)$ such that 
$
\lambda \bigl( \bigl\{ ( u_1, u_2) \in \cU_1 \times \cU_2 : \psi_1( u_1) = \psi_2( u_2) \bigr\} \bigr) = 1.
$
\end{rmk}

\vskip.1in 
 
\begin{rmk} \label{Rmk:VarPrinciple}
The proof of Theorem \ref{Thm:QuantConsist} also requires some elementary facts concerning 
the entropy of dynamical systems (see \cite{Walters2000} for a thorough treatment of the subject). 
Let $\Sigma \subset \{0,1\}^{\N}$ be a closed, shift-invariant set of label sequences. 
The topological entropy $h_{top}(\Sigma,\tau)$ of the system $(\Sigma, \tau)$ is given in (\ref{Eqn:Htop}). 
If $\eta$ is a shift-invariant Borel probability measure on such a set $\Sigma$, then the measure theoretic 
(Kolmogorov-Sinai) entropy of the system $(\Sigma, \tau, \eta)$ is defined by
\begin{equation*}
h(\eta) = \lim_n - \frac{1}{n+1} \sum_{a_0^n \in \Sigma_n} \eta([a_0^n]) \log \eta([a_0^n]),
\end{equation*}
where $\Sigma_n = \{ a_0^n \in \{0,1\}^{n+1} : \mathbf{a} \in \Sigma \}$ and $[a_0^n]$ denotes the cylinder 
set of sequences $\mathbf{a} \in \{0,1\}^{\N}$ whose first $n+1$ coordinates are $a_0^n$.
The well-known Variational Principle (see \cite{Walters2000}) states that
\begin{equation*}
 h_{top}(\Sigma,\tau) = \sup_{\eta \in \cM(\Sigma,\tau)} h(\eta).
\end{equation*}
Thus if $h_{top}(\Sigma,\tau) = 0$, then $h(\eta) = 0$ for any measure $\eta$ in $\cM(\Sigma,\tau)$.
\end{rmk}

\begin{PfofQuantConsist_II}

By Theorem \ref{Thm:DiscreteGeneral}, any sequence of estimates satisfying (\ref{Eqn:DiscreteEstimator}) 
converges almost surely to the parameter set 
$\Theta_0 = \argmin_{\theta \in \Theta} \dbar(\theta: \nu)$,
where $\nu$ is the measure of the observation process $(Y_i)_{i \geq 0}$, 
which involves errors.  By contrast, we have $\Theta_1 = \{\theta \in \Theta : \nu_0(L_{\theta}) = 1\}$, 
where $\nu_0$ is the measure for the error-free process $( \pi( R_{\theta_0}^i U))_{i \geq 0}$. 
It therefore suffices to show that $\theta$ minimizes $\dbar(\theta : \nu)$ if and only 
if $\nu_0(\cX_{\theta}) = 1$. Additionally, with this relation established, the converse
part of Theorem \ref{Thm:QuantConsist} is a direct consequence of the converse part of Theorem 
\ref{Thm:GeneralConvergence}.

Let $p = \mathbb{P} \big( \epsilon_0 =1 \bigr)$, which is less than $1/2$ by hypothesis.
We claim that for each $\theta$ in $\Theta$,
\begin{equation} 
\label{Eqn:Hornets}
\dbar(\theta: \nu) \ \geq \ p \, + \, \dbar(\theta : \nu_0) \, (1-2p).
\end{equation}
To see this, fix $\theta \in \Theta$.  Then Theorem \ref{Thm:GeneralConvergence} with $(\cX, S) = (\cX_{\theta},\tau)$ ensures that  
there is an optimal joining $\lambda_1$ of $\theta$ and $\nu$.  In detail, there exists $\lambda_1 \in \cM(\cX_{\theta} \times \{0,1\}^{\N}, \tau \times \tau)$ 
such that its first marginal, $\mu$ say, is supported on $\cX_{\theta}$, its second marginal is equal to $\nu$, 
and $\dbar(\theta: \nu) = \lambda_1( \{ (\mathbf{a}, \mathbf{b})  : a_0 \neq b_0 \})$.

Let $\eta$ be the process measure for the noise process $(\epsilon_i)_{i \geq 0}$, 
and let $\lambda_2$ denote the product measure $\nu_0 \otimes \eta$. 
Define the maps $\psi_1 : \{0,1\}^{\N} \times \{0,1\}^{\N} \to \{0,1\}^{\N}$ and $\psi_2 : \{0,1\}^{\N} \times \{0,1\}^{\N} \to \{0,1\}^{\N}$ by $\psi_1(\mathbf{a},\mathbf{b}) = \mathbf{b}$ 
and $\psi_2( \mathbf{v}, \boldsymbol{\epsilon}) = ( v_i \oplus \epsilon_i)_{i \geq 0}$. 
Note that $\nu$ is a common factor of $\lambda_1$ and $\lambda_2$ under the maps 
$\psi_1$ and $\psi_2$, respectively. 
Using the construction discussed in Remark \ref{Rmk:RelIndJoining}, one may construct a 
joining $\tilde{\lambda}$ of $\lambda_1$ and $\lambda_2$ such that
\begin{equation*}
\tilde{\lambda} 
\biggl( \bigl\{ (\mathbf{a},\mathbf{b},\mathbf{v},\boldsymbol{\epsilon}) : b_i = v_i \oplus \epsilon_i
\text{ for all } i \geq 0 \bigr\} \biggr) = 1.
\end{equation*}
Let $(A_i, B_i, V_i, \epsilon_i)_{i \geq 0}$ denote the multi-label process having distribution $\tilde{\lambda}$. 
By construction, the following hold:
\begin{enumerate}

\vskip.05in

\item
$(A_i)_{i \geq 0} \sim \mu$ and is supported on $\cX_{\theta}$; 

\vskip.06in

\item
$(B_i)_{i \geq 0} \sim \nu$; 

\vskip.06in

\item
$(V_i)_{i \geq 0} \sim \nu_0$; 

\vskip.06in

\item
$(\epsilon_i)_{i \geq 0} \sim \eta$ is a copy of the i.i.d.\ noise process; 

\vskip.06in

\item
$(A_i, B_i)_{i \geq 0} \sim \lambda_1$, the joining of $\mu$ and $\nu$; 

\vskip.06in

\item
$B_i = V_i \oplus \epsilon_i$ almost surely.

\vskip.05in

\end{enumerate}
From these properties and elementary arguments
we see that
\begin{align*}
\dbar(\theta: \nu) 
& \ = \ \lambda_1( A_0 \neq B_0) 
\ = \ \tilde{\lambda}( A_0 \neq (V_0 \oplus \epsilon_0) ) \\[.06in]
& \ \geq \ \tilde{\lambda} \Bigl( \{ \epsilon_0 = 1, \, A_0 = V_0 \} 
                                             \cup \{ \epsilon_0 = 0, \, A_0 \neq V_0 \} \Bigr) \\[.06in]
& \ = \ \tilde{\lambda} \bigl( \epsilon_0 = 1, \, A_0 = V_0 \bigr) + \tilde{\lambda} \bigl( \epsilon_0 = 0, \, A_0 \neq V_0  \bigr).
\end{align*}

As the measures $\mu$ and $\nu_0$ are supported on $\cL$, the assumption that $h(\cR) = 0$ 
implies that $h(\mu) = 0$ and $h(\nu_0) = 0$ (see Remark \ref{Rmk:VarPrinciple}). 
Let $\lambda_3$ be the joining of $\mu$ and $\nu_0$ given by the marginal distribution of 
$\tilde{\lambda}$ on $(\bf{a}, \bf{v})$. 
By a standard bound on entropy, $h(\lambda_3) \leq h(\mu) + h(\nu_0) = 0$, and therefore $h(\lambda_3)= 0$. 
It follows from a classical result of Furstenberg \cite[Theorem I.2]{Furstenberg1967} 
that the only joining between the zero-entropy measure $\lambda_3$ and the i.i.d.\ measure $\eta$ 
is the product (independent) joining $\lambda_3 \otimes \eta$.  
Consequently, $(A_0,V_0)$ and $\epsilon_0$ are independent under $\tilde{\lambda}$.  Therefore
\begin{align*}
\tilde{\lambda} \bigl( \epsilon_0 = 1, \, A_0 = V_0 \bigr) 
= 
\tilde{\lambda} \bigl( \epsilon_0 = 1 \bigr) \, \tilde{\lambda} \bigl( A_0 = V_0 \bigr) 
= 
p \, \bigl(1- \tilde{\lambda}( A_0 \neq V_0 ) \bigr),
\end{align*}
and 
\begin{align*}
\tilde{\lambda} \bigl( \epsilon_0 = 0, \, A_0 \neq V_0 \bigr) 
= 
\tilde{\lambda} \bigl( \epsilon_0 = 0 \bigr) \, \tilde{\lambda} \bigl( A_0 \neq V_0 \bigr) 
= 
(1-p) \, \tilde{\lambda}( A_0 \neq V_0 ).
\end{align*}
Combining the previous three displays gives
\begin{align*}
\dbar(\theta: \nu) 
& \ = \ \lambda_1 \bigl( A_0 \neq B_0 \bigr) \\[.06in]
& \ \geq p \ \bigl(1- \tilde{\lambda}( A_0 \neq V_0 ) \bigr) + (1-p) \tilde{\lambda}( A_0 \neq V_0 ) \\[.06in]
& \ = \ p + \tilde{\lambda}( A_0 \neq V_0 ) (1-2p).
\end{align*}
Under the joining $\tilde{\lambda}$, $(A_i)_{i \geq 0}$ is distributed according to 
$\mu$, which is supported on $\cX_{\theta}$, 
and $(V_i)_{i \geq 0}$ is distributed according to $\nu_0$.  Thus
$\tilde{\lambda}( A_0 \neq V_0) \geq \dbar(\theta: \nu_0)$,
and the inequality (\ref{Eqn:Hornets}) follows from the previous display as $p < 1/2$.

It follows from (\ref{Eqn:Hornets}) that $\dbar(\theta: \nu) \geq p$ for all $\theta$.  We now show
that $\dbar(\theta_0 : \nu) = p$, from which it follows that $\Theta_0 = \{ \theta : \dbar(\theta: \nu) = p\}$.
Let $\lambda = \nu_0 \otimes \eta \in \cM(\{0,1\}^{\N} \times \{0,1\}^{\N})$ and let 
$\psi : \{0,1\}^{\N} \times \{0,1\}^{\N} \to \{0,1\}^{\N} \times \{0,1\}^{\N}$ be defined by 
$\psi(\mathbf{a},\boldsymbol{\epsilon}) = (\mathbf{a}, (a_i \oplus \epsilon_i)_{i \geq 0})$.
It is straightforward to show that $\lambda_4 = \lambda \circ \psi^{-1}$ is a joining of 
$\nu_0$ with $\nu$ such that
$\lambda_4 ( \{ (\mathbf{a},\mathbf{b}) : a_0 \neq b_0 \} ) = p$,
and therefore $\dbar(\theta_0: \nu) = p$ as desired.

Let $\theta \in \Theta_0$.  The arguments above show that $\dbar(\theta : \nu) = p$, and then it follows from 
(\ref{Eqn:Hornets}) that $\dbar(\theta: \nu_0) = 0$.  Hence $\nu_0(\cX_{\theta}) = 1$ by Lemma \ref{Lemma:Dbar}, and we conclude that $\theta \in \Theta_1$.  This shows that $\Theta_0 \subseteq \Theta_1$. 
For the reverse inclusion, we note that if $\nu_0(\cX_{\theta}) = 1$, then (\ref{Eqn:Hornets}) and the joining $\lambda_4$ 
can be used to show that $\dbar(\theta: \nu) = p$ (as we did for $\theta_0$), which implies that $\theta$ is in $\Theta_0$.
\end{PfofQuantConsist_II}

\vspace{2mm}

If the partition $\pi$ does not not resolve differences between the generative transformation $R_{\theta_0}$ 
and other transformations on the support of $\nu_0$, then $\Theta_1$ may not be equal to $\{\theta_0\}$. 
The following result provides conditions under which $\Theta_1$ is a singleton.

\vskip.1in

\begin{prop}  \label{Prop:Identifiability}
\label{Prop:UniqueThetaStar_Intro}
Suppose that for all $\theta' \neq \theta_0$ there exists a neighborhood $\mathcal{O}$ of 
$\theta'$ and an integer $N$ depending on $\theta_0$ and $\mathcal{O}$ such that 
for all $u,v \in \cU$ and all $\theta \in \mathcal{O}$,
$\pi( R_{\theta_0}^k u) \neq \pi( R_{\theta}^k v )$ for some $k \in [0,N]$.
Then $\Theta_1 = \{\theta_0\}$.
\end{prop}
\begin{proof}
Suppose the hypotheses of the proposition hold, and let $\theta' \neq \theta_0$.  We will show that
$\nu_0(\cX_{\theta'}) = 0$, and therefore $\theta' \notin \Theta_1$. 
Let $\mathcal{O}$ and $N$ be as in the statement of the proposition.  For $\theta \in \Theta$ 
and $u \in \cU$ define
\begin{equation*}
C^N(\theta,u) = \{ \mathbf{a} \in \{0,1\}^{\N} : a_k = \pi(R_{\theta}^k u) \mbox{ for } 0 \leq k \leq N \},
\end{equation*}
and for $\Theta' \subseteq \Theta$ let $C^N(\Theta')$ be the union of $C^N(\theta,u)$ over
$\theta \in \Theta'$ and $u \in \cU$. 
The cylinder sets $C^N(\theta,u)$ are closed and open, and since $N$ is fixed, there are finitely many of them. 
As $\mathcal{O}$ is an open neighborhood of $\theta'$ it is clear that $\cX_{\theta'} \subset C^N(\mathcal{O})$.
Moreover, as $\nu_0$ is supported on the set of label sequences generated by $\theta_0$ and 
$C^N(\theta_0)$ is closed, $\supp(\nu_0) \subset C^N(\theta_0)$.
The hypotheses of the proposition imply that that $C^N(\mathcal{O})$ and $C^N(\theta_0)$ are disjoint,
and therefore $\cX_{\theta'}$ is disjoint from $\supp(\nu_0)$.
\end{proof}

\vspace{2mm}

\vskip.1in 
 
\begin{prop} \label{Prop:CircleRotIdentifiable}
Under the hypotheses in Section \ref{Ex:CircleRotations}, we have $\Theta_1 = \{\alpha_0\}$.
\end{prop}

\begin{proof}
Recall that $\Theta = [0,\frac{1}{2}]$.  We wish to apply Proposition \ref{Prop:UniqueThetaStar_Intro}. 
Let $\alpha_1 \neq \alpha_2$ and assume without loss of generality that $\alpha_1 < \alpha_2$. 
Fix $0 < \epsilon < (\alpha_2 - \alpha_1) / 2$ and let 
$\mathcal{O} = [\alpha_2-\epsilon, \alpha_2+\epsilon] \cap \Theta$ and 
$N \geq 3 / 2 (\alpha_2 - \alpha_1 -\epsilon)$.
Let $u, v \in [0,1)$ and $\alpha \in \mathcal{O}$. Define
\begin{equation*}
k = \inf \biggl\{ j \geq 0 : | (u + j \alpha) - (v+j \alpha_1)| \geq \frac{1}{2} \biggr\}.
\end{equation*}
Our choice of $N$ ensures that $N(\alpha - \alpha_1) \geq 3/ 2 \geq 1/2 + v - u$.  Therefore
 \begin{equation*}
   u + N \alpha - (v + N \alpha_1) \geq \frac{1}{2},
\end{equation*}
so that $k \leq N$. 
We claim that $\pi(R_{\alpha}^k u) \neq \pi(R_{\alpha_1}^k v)$. 
If $k = 0$ then $\frac{1}{2} \leq |u-v| < 1$, which implies that $\pi(u) \neq \pi(v)$. 
Suppose that $k \geq 1$. Using the definition of $k$ (twice) and the triangle inequality, we see that
\begin{align*}
\frac{1}{2} & \leq | (u + k \alpha) - (v + k \alpha_1)| \\[.06in]
  & \leq |(u + (k-1) \alpha) - (v+ (k-1)\alpha_1)| + |\alpha - \alpha_1| \\
  & < \frac{1}{2} + \frac{1}{2} 
   = 1,
\end{align*}
and therefore $\pi(R_{\alpha}^k u) \neq \pi(R_{\alpha_1}^k v)$. 
Now Proposition \ref{Prop:UniqueThetaStar_Intro} yields the result.
 \end{proof}

\vspace{2mm}

\begin{PfofCircleRotations}

We first show that $h(\cR) = 0$. 
For $u$ in $[0,1)$ and $\alpha$ in $\Theta$, let $\pi^n(u,\alpha)$ denote 
the element $w = w_0 \dots w_{n-1}$ of $\{0,1\}^{n}$ such that 
$R_{\alpha}^k u \in A_{w_k}$ for $k=0,\dots,n-1$.  
Define
\begin{equation*}
   C(n) = \biggl| \biggl\{ w \in \{0,1\}^{n} : \exists u \in [0,1), \exists \alpha \in \Theta, \, w = \pi^n(u,\alpha) \biggr\} \biggr|,
\end{equation*}
and note that
\begin{equation*}
h(\cR) \leq \limsup_n \frac{1}{n} \log C(n).
\end{equation*}
It is known (see \cite{Frid2014}) that $C(n) \leq K n^4$ for some constant $K$, and it then follows
from the previous display that $h(\cR) = 0$.
By Theorem \ref{Thm:QuantConsist}, any estimates satisfying 
(\ref{Eqn:DiscreteEstimator}) converge almost surely to $\Theta_1$, and $\Theta_1 = \{\alpha_0\}$ by 
Proposition \ref{Prop:CircleRotIdentifiable}.
\end{PfofCircleRotations}

\section*{Acknowledgements}

The authors thank Sayan Mukherjee and Karl Petersen for productive conversations.
Both authors acknowledge the support of the National Science Foundation under grant DMS 1613261, and AN also acknowledges support under grant DMS 1310002.


\bibliographystyle{plain}
\bibliography{Optimization_Fundamentals_refs}

\vskip.2in

\appendix
\section{Structure of the set of optimal joinings} 
\label{Sect:SetOfOptimalJoinings}

In this section, we investigate the structure of the set of optimal joinings in Theorem \ref{Thm:GeneralConvergence} and provide the proof of Theorem \ref{Thm:BasicPropertiesOfJmin}.  
Our results rely on a general version of the ergodic decomposition for invariant probability measures. 
The following version, a restatement of \cite[Theorem 2.5]{Sarig2008}, is sufficient for our purposes.

\begin{thm*}[The Ergodic Decomposition]
Suppose that $R : \cU \to \cU$ is a Borel measurable map of a Polish space $\cU$ and that $\lambda \in \cM(\cU,R)$. 
Then there exists a Borel probability measure $\xi$ on $\cM(\cU)$ 
such that 
\renewcommand{\theenumi}{\arabic{enumi}}
\renewcommand{\labelenumi}{(\theenumi)}
\begin{enumerate}
\vskip.07in
\item $\xi\bigl( \{ \eta \mbox{ is invariant and ergodic for $R$}  \} \bigr) = 1$
\vskip.1in
\item If $f \in L^1(\lambda)$, then $f \in L^1(\eta)$ for $\xi$-almost every $\eta$, and
        \begin{equation*}
         \int f \, d\lambda = \int \biggl(\int f \, d\eta \biggr) \, d\xi(\eta).
        \end{equation*}
\end{enumerate}
Whenever (2) holds, we write $\lambda = \int \eta \, d\xi$.
\end{thm*}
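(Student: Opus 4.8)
The plan is to realize the decomposing measure $\xi$ as the law, under $\lambda$, of the conditional measures obtained by disintegrating $\lambda$ along the $\sigma$-algebra of $R$-invariant sets. Let $\cI = \{ A \in \mathcal{B}(\cU) : R^{-1}A = A \}$ be the sub-$\sigma$-algebra of strictly invariant Borel sets. Since $\cU$ is Polish, $(\cU, \mathcal{B}(\cU))$ is standard Borel, so a regular conditional probability $x \mapsto \lambda_x \in \cM(\cU)$ of $\lambda$ given $\cI$ exists: $x \mapsto \lambda_x(A)$ is $\cI$-measurable for each Borel $A$, $\int f \, d\lambda_x = \E_\lambda[f \mid \cI](x)$ for $\lambda$-a.e.\ $x$, and $\int f \, d\lambda = \int ( \int f \, d\lambda_x ) \, d\lambda(x)$ for bounded Borel $f$. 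I would then define $\Psi : \cU \to \cM(\cU)$ by $\Psi(x) = \lambda_x$ and set $\xi = \pf{\lambda}{\Psi}$. With this choice the integral identity in (2) is precisely the disintegration formula above, first for bounded $f$ and then for $f \in L^1(\lambda)$ by truncation and dominated convergence; so the whole theorem reduces to showing that $\lambda_x$ is invariant and ergodic for $\lambda$-almost every $x$.

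To make the disintegration concrete and to set up the ergodicity argument, fix a countable family $\{ f_j \}_{j \geq 1} \subset C_b(\cU)$ that separates measures on $\cM(\cU)$ (available because $\cU$ is Polish). Applying Birkhoff's pointwise ergodic theorem to $\lambda$ and each $f_j$ yields $\cI$-measurable limits $\bar f_j = \E_\lambda[f_j \mid \cI]$, with $\tfrac1n \sum_{k=0}^{n-1} f_j \circ R^k \to \bar f_j$ $\lambda$-a.e. These $\bar f_j$ generate a \emph{countably generated} sub-$\sigma$-algebra $\cG \subseteq \cI$, and since $\E_\lambda[f_j \mid \cG] = \bar f_j = \E_\lambda[f_j \mid \cI]$, I may take the disintegration of $\lambda$ along $\cG$ rather than $\cI$. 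Working with a countably generated $\sigma$-algebra on a standard Borel space secures the crucial \emph{properness} property: for $\lambda$-a.e.\ $x$, the measure $\lambda_x$ is concentrated on the atom of $\cG$ containing $x$, whence $\lambda_y = \lambda_x$ for $\lambda_x$-a.e.\ $y$.

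Invariance of $\lambda_x$ is a short calculation. For $B \in \cI$ one has $R^{-1}B = B$, so $\mathbb{I}_B \cdot (f \circ R) = (\mathbb{I}_B \cdot f) \circ R$, and the $R$-invariance of $\lambda$ gives $\int_B f \circ R \, d\lambda = \int_B f \, d\lambda$. Hence $\E_\lambda[f \circ R \mid \cI] = \E_\lambda[f \mid \cI]$ $\lambda$-a.e., i.e.\ $\int f \circ R \, d\lambda_x = \int f \, d\lambda_x$. Applying this to the separating family $\{ f_j \}$ shows $\lambda_x \in \cM(\cU, R)$ for $\lambda$-a.e.\ $x$, giving invariance.

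The ergodicity of $\lambda_x$ is the crux, and I expect it to be the main obstacle. Using $\bar f_j(y) = \int f_j \, d\lambda_y$ together with properness, I get that for $\lambda$-a.e.\ $x$ and $\lambda_x$-a.e.\ $y$ one has $\lambda_y = \lambda_x$, hence $\bar f_j(y) = \int f_j \, d\lambda_x$ for every $j$. Combined with the Birkhoff convergence $\tfrac1n \sum_{k=0}^{n-1} f_j(R^k y) \to \bar f_j(y)$, this shows that $\lambda_x$-almost every $y$ is generic for $\lambda_x$ along $\{ f_j \}$, and therefore (cf.\ the genericity characterization behind Lemma \ref{Lemma:GenericPoints}) along all bounded continuous functions. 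A measure almost every point of which is generic for the measure itself is ergodic, so $\lambda_x$ is ergodic for $\lambda$-a.e.\ $x$; thus $\xi = \pf{\lambda}{\Psi}$ is carried by ergodic measures, which is conclusion (1). The delicate point is exactly the interplay between the pointwise ergodic theorem and the properness of the conditional measures---that each $\lambda_x$ lives on a single atom of the invariant $\sigma$-algebra---which is where the standard Borel structure of $\cU$ is indispensable.
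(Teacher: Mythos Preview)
The paper does not prove this statement at all: it is quoted as a known result, explicitly described as ``a restatement of \cite[Theorem 2.5]{Sarig2008},'' and is used only as a tool in Section~\ref{Sect:SetOfOptimalJoinings}. So there is no ``paper's own proof'' to compare against; any correct argument you give goes strictly beyond what the paper does.

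Your outline follows the standard disintegration-over-the-invariant-$\sigma$-algebra route and is essentially sound. The reduction from $\cI$ to the countably generated $\cG=\sigma(\bar f_j:j\ge 1)$ is handled correctly: since each $\bar f_j=\E_\lambda[f_j\mid\cI]$ is already $\cG$-measurable and the $f_j$ separate measures, the $\cG$- and $\cI$-disintegrations agree $\lambda$-a.e., and countable generation is exactly what delivers the properness $\lambda_x([x]_\cG)=1$ on a standard Borel space. The invariance argument is fine.

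The one place that deserves a little more care is the last step, where you assert that ``a measure almost every point of which is generic for the measure itself is ergodic.'' What you have actually shown is that, for $\lambda$-a.e.\ $x$, the Birkhoff averages of each $f_j$ converge $\lambda_x$-a.e.\ to the constant $\int f_j\,d\lambda_x$. To conclude ergodicity of $\lambda_x$ you should make explicit the bridge from the countable family $\{f_j\}$ to arbitrary invariant sets: apply Birkhoff \emph{under $\lambda_x$} to get $\E_{\lambda_x}[f_j\mid\cI]=\int f_j\,d\lambda_x$ $\lambda_x$-a.e., then pass from $\{f_j\}$ to all of $C_b(\cU)$ (choose the $f_j$ to be convergence-determining, not merely measure-separating), and finally use a monotone-class argument to reach indicators of invariant sets, yielding $\lambda_x(A)\in\{0,1\}$ for every $A\in\cI$. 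Appealing to Lemma~\ref{Lemma:GenericPoints} at this point is a bit circular in spirit, since that lemma is stated for ergodic measures; the direct argument above avoids that. With this clarification your sketch is a complete proof along classical lines.
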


\begin{rmk} \label{Rmk:Pushforward}
Suppose $f : \cU \to \cV$ is a Borel measurable map between Polish spaces $\cU$ and $\cV$ 
and that $\lambda \in \cM(\cU)$ satisfies $\lambda = \int \eta \, d\xi$. Then one may readily
check that $\pf{\lambda}{f} = \int \pf{\eta}{f} \, d\xi$.
\end{rmk}

In the remainder of this section we assume that the systems $(\cX,S)$ and $(\cY, T, \nu)$ 
and the loss $\ell$ satisfy the standard assumptions in 
Section \ref{Sect:Tracking_Intro}. 
The following three lemmas will be used to prove Theorem \ref{Thm:BasicPropertiesOfJmin}.

\begin{lemma} \label{Lemma:Botanical} 
If $\lambda \in \cJ(S: \nu)$ has ergodic decomposition $\lambda = \int \eta \, d\xi$,
then $\xi$-almost every $\eta$ is in $\cJ(S: \nu)$. 
\end{lemma}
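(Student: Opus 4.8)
The plan is to use the ergodic decomposition together with the defining properties of a joining — namely, membership in $\cM(\cX \times \cY, S \times T)$ and having $\cY$-marginal equal to $\nu$ — and show each of these properties is inherited by $\xi$-almost every ergodic component $\eta$.

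First I would observe that since $\lambda \in \cJ(S : \nu) \subseteq \cM(\cX \times \cY, S \times T)$, the Ergodic Decomposition theorem (applied with $R = S \times T$ and $\cU = \cX \times \cY$) gives that $\xi$-almost every $\eta$ is invariant and ergodic under $S \times T$; in particular $\eta \in \cM(\cX \times \cY, S \times T)$ for $\xi$-a.e.\ $\eta$. It remains only to check the marginal condition $\pf{\eta}{\cY} = \nu$ for $\xi$-a.e.\ $\eta$. By Remark \ref{Rmk:Pushforward} applied to the projection $\proj_{\cY} : \cX \times \cY \to \cY$, the decomposition pushes forward to $\nu = \pf{\lambda}{\cY} = \int \pf{\eta}{\cY} \, d\xi(\eta)$. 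So I need to argue that if an ergodic measure $\nu$ is written as an average $\int \rho \, d\xi'(\rho)$ of $\tau$-... — more precisely $T$-invariant measures $\rho = \pf{\eta}{\cY}$ (these are $T$-invariant since each $\eta$ is $S \times T$-invariant) — then $\xi'$-a.e.\ $\rho$ equals $\nu$. Note however that the pushed-forward measures $\pf{\eta}{\cY}$ need not themselves be ergodic, so one cannot immediately invoke uniqueness of the ergodic decomposition of $\nu$; this is the one genuine subtlety.

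To handle this, the cleanest route is to use ergodicity of $\nu$ directly via a countable family of test sets. Fix a countable collection $\{A_j\}_{j \geq 1}$ of Borel subsets of $\cY$ that generates the Borel $\sigma$-algebra (possible since $\cY$ is Polish). For each $j$, the pointwise ergodic theorem applied to $(\cX \times \cY, S \times T, \eta)$ with the function $(x,y) \mapsto \mathbb{I}_{A_j}(y)$ shows that for $\xi$-a.e.\ ergodic $\eta$, the Birkhoff averages $n^{-1}\sum_{k=0}^{n-1} \mathbb{I}_{A_j}(T^k y)$ converge $\eta$-a.e.\ to $\pf{\eta}{\cY}(A_j)$; but by Lemma \ref{Lemma:GenericPoints} (or the ergodic theorem for $(\cY, T, \nu)$) these same averages converge $\nu$-a.e.\ to $\nu(A_j)$. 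Since $\pf{\eta}{\cY} \ll \int \pf{\eta'}{\cY}\,d\xi(\eta') = \nu$ — wait, this absolute continuity is exactly what needs care — the point is simply that the set of $y$ where the $j$th Birkhoff average converges to $\nu(A_j)$ has $\nu$-measure one, hence has $\pf{\eta}{\cY}$-measure one only if $\pf{\eta}{\cY} \ll \nu$. Rather than pursue absolute continuity, I would instead integrate: $\nu(A_j) = \int \pf{\eta}{\cY}(A_j)\,d\xi(\eta)$, and separately, by considering the $S\times T$-invariant event $B_j = \{(x,y) : \text{$n^{-1}\sum \mathbb{I}_{A_j}(T^k y) \to \nu(A_j)$}\}$, note $\lambda(B_j) = 1$ because $\pf{\lambda}{\cY} = \nu$, so by part (2) of the decomposition $\eta(B_j) = 1$ for $\xi$-a.e.\ $\eta$; ergodicity of $\eta$ then forces the $\eta$-a.s.\ limit of that Birkhoff average (which is $\pf{\eta}{\cY}(A_j)$) to equal $\nu(A_j)$. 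Doing this for all $j$ simultaneously on a single $\xi$-full set, and using that $\{A_j\}$ generates the Borel $\sigma$-algebra, yields $\pf{\eta}{\cY} = \nu$ for $\xi$-a.e.\ $\eta$.

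Combining the two conclusions — $\eta \in \cM(\cX \times \cY, S \times T)$ and $\pf{\eta}{\cY} = \nu$ — for $\xi$-a.e.\ $\eta$ gives $\eta \in \cJ(S : \nu)$, using the characterization noted just after the definition of $\cJ(S:\nu)$ that joinings in $\cJ(S:\nu)$ are exactly the $S\times T$-invariant measures with $\cY$-marginal $\nu$. I expect the main obstacle to be precisely the step of transferring a $\nu$-almost-sure Birkhoff convergence statement to a $\pf{\eta}{\cY}$-almost-sure statement; routing this through the $S\times T$-invariant event $B_j$ on $\cX \times \cY$ and part (2) of the Ergodic Decomposition, rather than through any absolute-continuity argument, is the key idea that makes everything go through cleanly.
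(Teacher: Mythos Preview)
Your argument is correct, but the paper takes a shorter route. Both proofs share the first two steps: $\xi$-almost every $\eta$ lies in $\cM(\cX\times\cY,S\times T)$ by the ergodic decomposition, and pushing forward along $\proj_{\cY}$ yields $\nu=\int \pf{\eta}{\cY}\,d\xi(\eta)$. Where you then run a direct Birkhoff-average argument over a countable generating family of Borel sets, the paper instead invokes the fact that an ergodic measure is an extreme point of the convex set $\cM(\cY,T)$: once $\nu$ is written as a barycenter of $T$-invariant measures, extremality forces $\pf{\eta}{\cY}=\nu$ for $\xi$-almost every $\eta$ in a single stroke, without any need to worry about whether the individual $\pf{\eta}{\cY}$ are themselves ergodic.

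Your approach has the virtue of being entirely self-contained and elementary --- it uses nothing beyond Birkhoff's theorem and the integral identity in the ergodic decomposition --- and your routing through the invariant event $B_j$ (so that $\lambda(B_j)=1$ forces $\eta(B_j)=1$ for $\xi$-a.e.\ $\eta$) neatly avoids the absolute-continuity trap you flagged. The paper's approach is shorter and more conceptual but leans on the extreme-point/barycenter fact (which, while standard, is a nontrivial Choquet-type statement). Either way the subtlety you identified --- that the $\cY$-marginals of ergodic components need not be ergodic --- is real, and both arguments handle it, just by different means.
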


\begin{proof}
Let $\lambda \in \cJ(S: \nu)$ have ergodic decomposition $\lambda = \int \eta \, d\xi$. 
Then $\xi$-almost every $\eta$ is in $\cM(\cX \times \cY, S \times T)$, and for these measures $\pf{\eta}{\cX}$ is necessarily in $\cM(\cX,S)$. 
It follows from Remark \ref{Rmk:Pushforward} that $\nu = \pf{\lambda}{\cY} = \int \pf{\eta}{\cY} \, d\xi$. 
Since $\nu$ is ergodic, it is an extreme point of the convex set $\cM(\cY,T)$ (see \cite[Proposition 12.4]{Phelps2001}), 
and therefore $\pf{\eta}{\cY} = \nu$ for $\xi$-almost every $\eta$. 
\end{proof}

\begin{lemma} \label{Lemma:JminErgDecomp}
If $\lambda \in \cJ_{min}(S: \nu)$ has ergodic decomposition $\lambda = \int \eta \, d\xi$,
then $\xi$-almost every $\eta$ is ergodic and contained in $\cJ_{min}(S: \nu)$.
\end{lemma}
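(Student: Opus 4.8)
\textbf{Proof plan for Lemma \ref{Lemma:JminErgDecomp}.}
The plan is to leverage the preceding Lemma \ref{Lemma:Botanical} together with part (2) of the Ergodic Decomposition theorem applied to the cost function $c$. First I would observe that since $\lambda \in \cJ_{min}(S:\nu) \subseteq \cJ(S:\nu)$, Lemma \ref{Lemma:Botanical} immediately gives that $\xi$-almost every $\eta$ lies in $\cJ(S:\nu)$, and by construction of the ergodic decomposition, $\xi$-almost every $\eta$ is ergodic under $S \times T$. So the only remaining point is to upgrade ``$\eta \in \cJ(S:\nu)$'' to ``$\eta \in \cJ_{min}(S:\nu)$'' for $\xi$-a.e.\ $\eta$, i.e.\ to show $\int c\,d\eta = C(S:\nu)$ for $\xi$-a.e.\ $\eta$.

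The key step is the following dichotomy argument. Since $\sup_x|c(x,y)| \le c^*(y)$ with $c^* \in L^1(\nu)$ and the $\cY$-marginal of $\lambda$ is $\nu$, we have $c \in L^1(\lambda)$; hence part (2) of the Ergodic Decomposition theorem applies and yields
\begin{equation*}
C(S:\nu) = \int c \, d\lambda = \int \Bigl( \int c \, d\eta \Bigr) \, d\xi(\eta),
\end{equation*}
where the first equality is just the definition of $\cJ_{min}(S:\nu)$. On the other hand, by Lemma \ref{Lemma:Botanical}, $\xi$-a.e.\ $\eta$ is in $\cJ(S:\nu)$, so by the definition \eqref{Eqn:DefOfC} of $C(S:\nu)$ as an infimum we have $\int c\,d\eta \ge C(S:\nu)$ for $\xi$-a.e.\ $\eta$. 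A non-negative (in the $\xi$-a.e.\ sense) integrable function with integral zero must vanish $\xi$-a.e.; applying this to $\eta \mapsto \int c\,d\eta - C(S:\nu)$ gives $\int c\,d\eta = C(S:\nu)$ for $\xi$-almost every $\eta$.

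Combining the three facts — $\xi$-a.e.\ $\eta$ is ergodic (from the decomposition), $\xi$-a.e.\ $\eta \in \cJ(S:\nu)$ (from Lemma \ref{Lemma:Botanical}), and $\xi$-a.e.\ $\eta$ achieves $C(S:\nu)$ (from the argument above) — yields that $\xi$-almost every $\eta$ is ergodic and lies in $\cJ_{min}(S:\nu)$, as claimed. I do not anticipate a serious obstacle here; the only point requiring a little care is confirming the integrability hypothesis $c \in L^1(\lambda)$ so that the ergodic decomposition identity for $\int c\,d\lambda$ is legitimate, but this follows directly from the standard assumptions exactly as in the proof of Lemma \ref{Lemma:CisFinite}.
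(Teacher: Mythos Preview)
Your proposal is correct and matches the paper's proof essentially line for line: the paper likewise invokes Lemma \ref{Lemma:Botanical} to place $\xi$-a.e.\ $\eta$ in $\cJ(S:\nu)$, uses $c\in L^1(\lambda)$ to write $\int c\,d\lambda=\int(\int c\,d\eta)\,d\xi(\eta)$, and then argues that the pointwise inequality $\int c\,d\eta\ge\int c\,d\lambda$ forces equality $\xi$-a.e.
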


\begin{proof}
Ergodicity of $\xi$-almost every $\eta$ follows from the definition of the ergodic decomposition.
By assumption, $\ell \in L^1(\lambda)$, and the ergodic decomposition yields 
\begin{equation*} 
\int \ell \, d\lambda = \int \biggl( \int \ell \, d\eta \biggr) \, d\xi(\eta).
\end{equation*}
By Lemma \ref{Lemma:Botanical}, $\xi$-almost every $\eta \in \cJ(S: \nu)$ 
and in this case $\int \ell \, d\lambda \leq \int \ell \, d\eta$,
as $\lambda \in \cJ_{min}(S: \nu)$.  It follows that
$\int \ell \, d\lambda = \int \ell \, d\eta$ for $\xi$-almost every $\eta$.
\end{proof}

\begin{lemma} \label{Lemma:PhigIsUSC}
The functional $\phi_{\ell} : \cJ( S: \nu) \to \R$ defined by
$\phi_{\ell}( \lambda ) = \int \ell \, d\lambda$ is lower semi-continuous.
\end{lemma}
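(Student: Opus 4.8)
The goal is to show that $\phi_c(\lambda) = \int c\, d\lambda$ is lower semi-continuous on $\cJ(S:\nu)$ with the weak topology. The key point is that $c$ is lower semi-continuous and its absolute value is dominated by a $\nu$-integrable function $c^*$ (pulled back to $\cX \times \cY$ in the obvious way). The plan is to approximate $c$ from below by bounded continuous functions and pass to the limit, using monotone/dominated convergence to control the error coming from unboundedness.

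\begin{proof}
Write $\tilde{c}^*(x,y) = c^*(y)$ for the dominating function on $\cX \times \cY$; note that $\tilde{c}^*$ is $\lambda$-integrable for every $\lambda \in \cJ(S:\nu)$ since the $\cY$-marginal of any such $\lambda$ is $\nu$. First I would reduce to the bounded-below case. For $\ell \in \N$ put $c^{(\ell)} = \max(c, -\ell)$. Each $c^{(\ell)}$ is lower semi-continuous and bounded below, $c^{(\ell)} \downarrow$? no --- rather $c^{(\ell)}$ increases to $c$ pointwise as $\ell \to \infty$? Actually $\max(c,-\ell)$ decreases to $c$... let me instead use $c^{(\ell)} = \max(c,-\ell)$, which satisfies $c^{(\ell)} \geq c$, $c^{(\ell)} \downarrow c$ pointwise, and $|c^{(\ell)}| \leq \tilde{c}^* + \ell$... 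The cleaner route: since $c \geq -\tilde{c}^*$ and $\tilde{c}^* \in L^1(\lambda)$ for all relevant $\lambda$, write $c = g - \tilde{c}^*$ where $g = c + \tilde{c}^* \geq 0$ is lower semi-continuous and nonnegative. Then $\int c\, d\lambda = \int g\, d\lambda - \int \tilde{c}^*\, d\lambda$. The map $\lambda \mapsto \int \tilde{c}^*\, d\lambda$ is continuous on $\cJ(S:\nu)$ because $\tilde{c}^*$ depends only on $y$ through the fixed integrable function $c^*$ and the $\cY$-marginal is pinned to $\nu$ (so this integral is the constant $\int c^*\, d\nu$). Hence it suffices to show $\lambda \mapsto \int g\, d\lambda$ is lower semi-continuous for $g \geq 0$ lower semi-continuous.

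For a nonnegative lower semi-continuous function $g$ on a metric space, there is an increasing sequence of bounded continuous functions $g_m \uparrow g$ pointwise (e.g. $g_m(z) = \min(m, \inf_{z'} [g(z') + m\, d(z,z')])$). By the monotone convergence theorem, $\int g\, d\lambda = \sup_m \int g_m\, d\lambda$ for every $\lambda$. Since each $g_m$ is bounded and continuous, $\lambda \mapsto \int g_m\, d\lambda$ is continuous on $\cM(\cX\times\cY)$ by definition of the weak topology, hence in particular on $\cJ(S:\nu)$. A supremum of continuous functions is lower semi-continuous, so $\lambda \mapsto \int g\, d\lambda$ is lower semi-continuous. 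Combining with the continuity of $\lambda \mapsto \int \tilde{c}^*\, d\lambda$ shows $\phi_c = (\lambda \mapsto \int g\, d\lambda) - (\lambda \mapsto \int \tilde{c}^*\, d\lambda)$ is lower semi-continuous (a lower semi-continuous function minus a continuous one is lower semi-continuous).

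The only mild subtlety --- and the step I would be most careful about --- is the construction of the approximating sequence $g_m \uparrow g$ of bounded continuous functions, and the verification that the inf-convolution formula indeed produces continuous functions increasing to $g$ when $g$ is merely lower semi-continuous (not necessarily finite-valued, though here $g$ is finite and bounded below by $0$, so this is standard) on the Polish, hence metrizable, space $\cX \times \cY$. This is a routine fact from real analysis, so I would cite it rather than prove it in detail. Everything else is an application of monotone convergence and the elementary fact that suprema of continuous functions are lower semi-continuous.
\end{proof}
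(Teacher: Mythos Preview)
There is a genuine gap. You claim that $g = c + \tilde{c}^*$ is lower semi-continuous, but the standing assumptions say only that $c^* \in L^1(\nu)$; no regularity of $c^*$ is assumed. Since $\tilde{c}^*(x,y) = c^*(y)$ need not be lower semi-continuous (or even continuous), the sum $c + \tilde{c}^*$ need not be lower semi-continuous, and your approximation of $g$ from below by bounded continuous functions is unjustified. Your observation that $\lambda \mapsto \int \tilde{c}^*\, d\lambda$ is \emph{constant} on $\cJ(S:\nu)$ (because the $\cY$-marginal is pinned to $\nu$) is correct and is exactly the right idea, but it does not help you add $\tilde{c}^*$ \emph{inside} the lower-semicontinuity argument for $g$.

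The fix is the truncation route you abandoned, combined with the pinned-marginal observation. Set $c^{(\ell)} = \max(c,-\ell)$; each $c^{(\ell)}$ is lower semi-continuous and bounded below, so $\lambda \mapsto \int c^{(\ell)}\, d\lambda$ is lower semi-continuous by the Portmanteau theorem (or your sup-of-continuous argument). On $\cJ(S:\nu)$ one has
\[
0 \ \le \ \int (c^{(\ell)} - c)\, d\lambda \ \le \ \int_{\{c^* > \ell\}} c^*\, d\nu,
\]
which tends to $0$ as $\ell \to \infty$ \emph{uniformly in} $\lambda$, again because the $\cY$-marginal is fixed. A uniform limit of lower semi-continuous functions is lower semi-continuous, and the result follows. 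This is precisely the truncation argument the paper invokes (deferring details to the analogous step in the proof of the Tracking Theorem).
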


If the loss $\ell$ is bounded from below, then the conclusion of this lemma follows immediately from the Portmanteau Theorem for weak convergence. 
In the general case, the result may be established by a truncation argument; as the argument is very similar to that in the proof of Theorem \ref{Thm:GeneralConvergence}, we omit the details.  


\begin{prop} \label{Prop:Marcus}
The set of measures $\cJ(S: \nu)$ is compact in the weak topology.
\end{prop}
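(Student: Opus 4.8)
The plan is to establish weak compactness by combining a tightness argument, which gives relative compactness via Prokhorov's theorem, with a closedness argument, which shows that weak limits of joinings are again joinings. The only genuinely delicate point is that $T$ is merely Borel measurable, so the set of $S \times T$-invariant measures need not be weakly closed; I will sidestep this exactly as in the proof of the \Tracking, by passing to the (always continuous) shift system on $\cY^{\N}$.

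First I would record that, as noted after the definition of $\cJ(S:\nu)$, this set coincides with the collection of all $\lambda \in \cM(\cX \times \cY)$ that are invariant under $S \times T$ and have $\cY$-marginal equal to $\nu$. I would then dispose of the case in which $T$ is continuous. For tightness: since $\cY$ is Polish, $\nu$ is tight, so for each $\ell \geq 1$ choose a compact set $K_\ell \subseteq \cY$ with $\nu(K_\ell) > 1 - 1/\ell$; then for every $\lambda \in \cJ(S:\nu)$ we have $\lambda(\cX \times K_\ell) = \pf{\lambda}{\cY}(K_\ell) = \nu(K_\ell) > 1 - 1/\ell$, and $\cX \times K_\ell$ is compact because $\cX$ is compact. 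Hence $\cJ(S:\nu)$ is tight, and therefore relatively weakly compact by Prokhorov's theorem. For closedness, suppose $\lambda_n \Rightarrow \lambda$ with each $\lambda_n \in \cJ(S:\nu)$: the $\cY$-marginal is preserved under weak limits because $\proj_{\cY}$ is continuous, so $\pf{\lambda}{\cY} = \nu$; and for any $f \in C_b(\cX \times \cY)$, continuity of $S \times T$ gives $f \circ (S \times T) \in C_b(\cX \times \cY)$, whence $\int f \circ (S \times T)\, d\lambda = \lim_n \int f \circ (S \times T)\, d\lambda_n = \lim_n \int f\, d\lambda_n = \int f\, d\lambda$, so $\lambda$ is $S \times T$-invariant. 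Thus $\lambda \in \cJ(S:\nu)$, and the continuous case is complete.

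For general Borel $T$, I would reduce to the continuous case using Lemma \ref{Lemma:GoHeels}. Let $\tilde{\nu} \in \cM(\cY^{\N}, \tau)$ be the process measure generated by $(\cY, T, \nu)$. Since $\tau$, and hence $S \times \tau$, is continuous on the Polish space $\cX \times \cY^{\N}$, the argument of the previous paragraph applied to the systems $(\cX, S)$ and $(\cY^{\N}, \tau, \tilde{\nu})$ shows that $\cJ(S:\tilde{\nu})$ is weakly compact. By Lemma \ref{Lemma:GoHeels}, $\cJ(S:\nu) = m_{\cX \times \cY}(\cJ(S:\tilde{\nu}))$, where $m_{\cX \times \cY}$ is the push-forward under the continuous map $(x, \mathbf{y}) \mapsto (x, y_0)$. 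Push-forward under a continuous map is weakly continuous, so $\cJ(S:\nu)$ is the continuous image of a weakly compact set, hence itself weakly compact.

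The main obstacle — indeed the only subtle point — is the possible discontinuity of $T$, which would otherwise allow weak limits of $S \times T$-invariant measures to fail invariance; the passage to the shift space resolves this cleanly. Everything else (tightness from compactness of $\cX$ together with tightness of $\nu$, and preservation of the $\cY$-marginal and of invariance under weak limits) is routine.
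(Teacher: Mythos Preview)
Your proof is correct and follows essentially the same approach as the paper: both establish tightness from compactness of $\cX$ together with tightness of the $\cY$-(or $\cY^{\N}$-)marginal, closedness from continuity of $S \times \tau$, and then use Lemma \ref{Lemma:GoHeels} to transfer compactness of $\cJ(S:\tilde{\nu})$ to $\cJ(S:\nu)$ via the continuous projection $(x,\mathbf{y}) \mapsto (x,y_0)$. The only cosmetic difference is that the paper goes straight to the shift space and phrases closedness in terms of fixed points of the continuous push-forward map $m_{S\times\tau}$, whereas you first handle the continuous-$T$ case sequentially and then specialize it to the shift.
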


\begin{proof}
We address the general case of measurable $T$.  If $T$ is continuous, then the proof may be simplified.
Let $\tilde{\nu} \in \cM(\cY^{\N})$ be the process measure generated by $(T,\nu)$ (see Section \ref{Sect:Process}),
and let $\cJ(S: \tilde{\nu})$ be the associated set of joinings.

We claim that $\cJ(S: \tilde{\nu})$ is compact in $\cM(\cX \times \cY^{\N})$.  
To see this, note that the direct product of $S$ with the left-shift $\tau$ on $\cY^{\N}$ is continuous, since each of these maps is continuous. 
It then follows that the induced push-forward map $m_{S \times \tau}$ from $\cM(\cX \times \cY^{\N})$ to itself is continuous in the weak topology.
The set of fixed points of any continuous map is closed. 
Hence the set $C_1 = \cM(\cX \times \cY^{\N}, S \times \tau)$ of fixed points of $m_{S \times \tau}$ is closed.  
Additionally, the continuity of the projection $m_{\cY^{\N}} : \cM(\cX \times \cY^{\N}) \to \cM(\cY^{\N})$ ensures that $C_2  = m_{\cY^{\N}}^{-1}\{\tilde{\nu}\}$ is also closed, and therefore our set of interest $\cJ(S: \tilde{\nu}) = C_1 \cap C_2$ is closed.  
Now let $\epsilon >0$. Since $\{\tilde{\nu}\}$ is tight, there exists a compact set $K \subset \cY^{\N}$ such that $\tilde{\nu}(K) > 1-\epsilon$.
By assumption $\cX$ is compact, and therefore $\cX \times K$ is compact. 
Also, for any $\lambda \in \cJ(S: \tilde{\nu})$, we have $\lambda(\cX \times K) = \tilde{\nu}(K) > 1-\epsilon$. 
Since $\epsilon>0$ was arbitrary, we see that $\cJ(S: \tilde{\nu})$ is tight. 
Then, since it is both tight and closed, we may conclude that it is compact.

As the projection $(x,\mathbf{y}) \mapsto (x,y_0)$ is continuous, the induced 
push-forward map $m_{\cX \times \cY} : \cM(\cX \times \cY^{\N}) \to \cM(\cX \times \cY)$ 
is continuous.
By Lemma \ref{Lemma:GoHeels}, $m_{\cX \times \cY}(\cJ(S: \tilde{\nu})) = \cJ(S: \nu)$, and 
therefore $\cJ(S: \nu)$ is compact, as it is the image of a compact set under a continuous map. 
\end{proof}

\vspace{2mm}

\begin{PfofOptJoinings}

Let  $\phi_{\ell} : \cJ( S: \nu) \to \R$ be the functional defined by
$\phi_{\ell}( \lambda ) = \int \ell \, d\lambda$.
The convexity of $\cJ_{min}(S: \nu)$ follows from the fact that 
$\phi_{\ell}(t \lambda_1 + (1-t) \lambda_2) = t \phi_{\ell}(\lambda_1) + (1-t) \phi_{\ell}( \lambda_2)$.
As $\cJ(S: \nu)$ is compact in the weak topology (Proposition \ref{Prop:Marcus}) and $\phi_{\ell}$ is lower semicontinuous (Lemma \ref{Lemma:PhigIsUSC}), 
the set $\cJ_{min}(S: \nu)$ where $\phi_{\ell}$ attains its minimum is compact.  

It remains to identify the extreme points of $\cJ_{min}(S: \nu)$.   
Any ergodic measure in the (convex) set $\cM(\cX \times \cY, S \times T)$ is an extreme 
point of this set (see \cite[Proposition 12.4]{Phelps2001}), and it follows that any
ergodic measure $\lambda \in \cJ_{min}(S: \nu)$ is an extreme point of $\cJ_{min}(S: \nu)$. 
Suppose now that $\lambda$ is an extreme point of $\cJ_{min}(S: \nu)$, and 
let $\lambda = \int \eta \, d\xi$ be its ergodic decomposition. 
By Lemma \ref{Lemma:JminErgDecomp}, $\xi$-almost every $\eta$ is in $\cJ_{min}(S: \nu)$, and 
as $\lambda$ is an extreme point of $\cJ_{min}(S: \nu)$, it follows that
$\xi$-almost every $\eta$ equals $\lambda$.  
Then it follows from the ergodic decomposition that  $\lambda$ is ergodic.
\end{PfofOptJoinings}

\end{document}